\documentclass[12pt]{amsart}
%
% September 22, 2018
%
\usepackage{amsmath,amssymb,bm}
\usepackage{graphicx,color}
\usepackage{diagbox}
\usepackage{subfigure}
\setlength{\textheight}{8.5truein}
\setlength{\textwidth}{6.5truein}
\setlength{\oddsidemargin}{-0.035in}
\setlength{\evensidemargin}{-0.035in}
\newtheorem{theorem}{Theorem}[section]
\newtheorem{lemma}[theorem]{Lemma}

\theoremstyle{definition}

\newtheorem{example}[theorem]{Example}
\theoremstyle{remark}
\newtheorem{remark}[theorem]{Remark}
\numberwithin{equation}{section}
\numberwithin{table}{section}
\numberwithin{figure}{section}
\newcommand{\trinorm}[1]{%
  \left|\mkern-1.5mu\left|\mkern-1.5mu\left|
   #1
  \right|\mkern-1.5mu\right|\mkern-1.5mu\right|
}
\DeclareMathOperator*{\argmin}{argmin}
\begin{document}
\def\d{\displaystyle}
\def\O{\Omega}
\def\p{\partial}
\def\LT{{L_2(\O)}}
\def\cB{\mathcal{B}}
\def\cT{\mathcal{T}}
\def\HB{{H^1_\beta(\O)}}
\def\HO{{H^1(\O)}}
\def\HOz{{H^1_0(\O)}}
\def\ES{\HOz\times\HOz}
\def\FS{V_h\times V_h}
\def\FE#1{V_{#1}\times V_{#1}}
\def\cV{\mathcal{V}}
\def\fC{\mathfrak{C}}
\def\fB{\mathfrak{B}}
\def\FO{\fB_k\fC_k^{-1}\fB_k}
%:DEF
\def\AP{Id_k-I_{k-1}^kP_k^{k-1}}
\def\red{\color{red}}
\def\splus{{\hspace{-1pt}\tiny{+}\!}}
\title[Multigrid methods for saddle point problems: KKT Systems]
{Multigrid methods for saddle point problems:  Karush-Kuhn-Tucker systems}
\author{Susanne C. Brenner}
\address{Susanne C. Brenner, Department of Mathematics and Center for
Computation and Technology, Louisiana State University, Baton Rouge}
\email{brenner@math.lsu.edu}
\author{Sijing Liu}
\address{Sijing Liu, Department of Mathematics and Center for Computation and Technology,
 Louisiana State University, Baton Rouge, LA 70803, USA}
\email{sliu42@lsu.edu}
\author{Li-yeng Sung}
 \address{Li-yeng Sung,
 Department of Mathematics and Center for Computation and Technology,
 Louisiana State University, Baton Rouge, LA 70803, USA}
\email{sung@math.lsu.edu}
\thanks{This work was supported in part
 by the National Science Foundation under Grant No.
 DMS-16-20273.}
\date{September 22, 2018}
\begin{abstract}
 We construct multigrid methods for an elliptic distributed optimal control problem
  that are robust with respect to a regularization parameter.  We prove the uniform
  convergence of the $W$-cycle  algorithm and demonstrate the performance of
  $V$-cycle and $W$-cycle algorithms in two and three  dimensions through
 numerical experiments.
\end{abstract}
\subjclass{49J20, 65N30, 65N55, 65N15}
\keywords{elliptic distributed optimal control problem, saddle point problem,
 $P_1$ finite element method, multigrid methods}
\maketitle
%%%%%%%%%%%%%%%%%%%%%%%%%%%%%%
\section{Introduction}\label{sec:Introduction}
 Let $\O$ be a bounded convex polygonal domain in $\mathbb{R}^d$ ($d=2, 3$),
 $y_d\in \LT$, $\beta$ be a positive constant and
 $(\cdot,\cdot)_\LT$ be the inner product of $\LT$ (or $[\LT]^d$).
  The optimal control problem is to find
\begin{equation}\label{eq:OCP}
 (\bar{y},\bar{u})=\argmin_{(y,u)\in K}\left [
 \frac{1}{2}\|y-y_d\|^2_{\LT}+\frac{\beta}{2}\|u\|^2_\LT\right],
\end{equation}
 where $(y,u)$ belongs to $K\subset H^1_0(\O)\times \LT$ if and only if
\begin{equation}\label{eq:PDEConstraint}
 (\nabla y, \nabla v)_\LT=(u,v)_\LT \quad \forall\, v\in H^1_0(\Omega).
\end{equation}
 Here and throughout the paper we will follow the standard notation for differential operators,
 function spaces and norms that can be found for example in \cite{Ciarlet:1978:FEM,BScott:2008:FEM}.
\par
 The optimal control problem \eqref{eq:OCP}--\eqref{eq:PDEConstraint} has a unique solution
 characterized by the following system of equations (cf. \cite{Lions:1971:OC,Troltzsch:2010:OC}):
\begin{subequations}\label{subeq:Lions}
\begin{alignat}{3}
(\nabla\bar{p},\nabla q)_\LT&=(\bar{y}-y_d,q)_\LT &\qquad& \forall\, q\in\HOz,\label{eq:Lions1}\\
\bar{p}+\beta\bar{u}&=0,\label{eq:Lions2}\\
(\nabla\bar{y},\nabla z)_\LT&=(\bar{u},z)_\LT &\qquad&\forall\,  z\in \HOz,\label{eq:Lions3}
\end{alignat}
\end{subequations}
 where $\bar{p}$ is the (optimal) adjoint state.
 After eliminating $\bar{u}$, we have a symmetric saddle point problem
\begin{subequations}\label{subeq:SPP}
\begin{alignat}{3}
(\nabla\bar{p},\nabla q)_\LT-(\bar{y},q)_\LT&=-(y_d,q)_\LT &\qquad& \forall\, q\in \HOz,
\label{eq:SPP1}\\
-(\bar{p},z)_\LT-\beta(\nabla \bar{y},\nabla z)_\LT&=0  &\qquad&\forall\, z \in \HOz.
\label{eq:SPP2}
\end{alignat}
\end{subequations}
\par
 Note that the system \eqref{subeq:SPP} is unbalanced with respect to $\beta$
 since it only appears  in \eqref{eq:SPP2}.  This can be remedied by the following
 change of variables:
\begin{equation}\label{eq:CV}
\bar{p}=\beta^{\frac{1}{4}}\tilde{p}\quad\text{and}\quad
\bar{y}=\beta^{-\frac{1}{4}}\tilde{y}.
\end{equation}
 The resulting saddle point problem is
\begin{subequations}\label{subeq:BSPP}
\begin{alignat}{3}
 \beta^{\frac{1}{2}}(\nabla \tilde{p},\nabla q)_\LT-(\tilde{y},q)_\LT
 &=-\beta^{\frac{1}{4}}(y_d,q)_\LT
  &\qquad&\forall \,{q}\in \HOz,\label{eq:BSPP1}\\
 -(\tilde{p},{z})_\LT-\beta^{\frac{1}{2}}(\nabla\tilde{y},\nabla{z})_\LT&=0
 &\qquad& \forall\,{z}\in \HOz.\label{eq:BSPP2}
\end{alignat}
\end{subequations}
\par
 The saddle point problem \eqref{subeq:BSPP} can be discretized by a $P_1$ finite element method.
 Our goal is to design multigrid methods for the resulting discrete saddle point problem
 whose performance is independent of the regularization parameter $\beta$.
 The key idea is to use a post-smoother that can be interpreted as a Richardson
 iteration for a symmetric positive definite (SPD) problem that has the same solution
 as the saddle point problem.  Consequently we can exploit the well-known multigrid
 theory for SPD problems \cite{Hackbusch:1985:MMA,MMB:1987:VMT,BZ:2000:AMG}
 in our convergence analysis.  This idea has previously been applied to other
 saddle point problems in
 \cite{BLS:2014:StokesLame,BLS:2017:Oseen,BOS:2018:Darcy}.
\par
 Our multigrid methods belong to the class of all-at-once methods where all the unknowns
 in \eqref{subeq:SPP} are solved simultaneously (cf.
 \cite{BS:2009:MGReview,EG:2011:MG,SSZ:2011:MG,BS:2012:OC,TZ:2013:MGOCP} and the references
 therein).  Multigrid methods that are robust with respect to $\beta$ can also be found in
  the papers \cite{SSZ:2011:MG,TZ:2013:MGOCP}.
 The differences are in the construction of the smoothers and in the norms that measure
 the convergence of the multigrid algorithms.  The smoothing steps in \cite{SSZ:2011:MG,TZ:2013:MGOCP} are
 computationally
 less expensive than the one in the current paper, which requires solving (approximately)
 a diffusion-reaction problem (which however does not affect the $O(n)$ complexity).
   The trade-off is that the convergence of the multigrid algorithm in this paper
  is expressed in terms of the natural energy norm for the continuous problem, while the norms in
  \cite{SSZ:2011:MG,TZ:2013:MGOCP} are different from the energy norm.  A related consequence is that
  the $W$-cycle multigrid algorithms in \cite{SSZ:2011:MG,TZ:2013:MGOCP} cannot take advantage of
  post-smoothing
  and hence their contraction numbers decay at the rate of $O(m^{-1/2})$,
   where $m$ is the number of pre-smoothing
  steps, while the contraction number for our symmetric $W$-cycle
  multigrid algorithm decays at the rate of $O(m^{-1})$, where
  $m$ is the number of pre-smoothing and post-smoothing steps.
\par
 The rest of the paper is organized as follows.  We analyze the saddle point problem
 \eqref{subeq:BSPP}  and the $P_1$ finite element method
 in Section~\ref{sec:P1} and introduce the multigrid algorithms in Section~\ref{sec:MG}.
  We derive smoothing and approximation properties in Section~\ref{sec:SA} that are
  the key ingredients for the convergence analysis of the $W$-cycle algorithm in
  Section~\ref{sec:Convergence}.  Numerical results are
  presented in Section~\ref{sec:Numerics} and we end with some concluding remarks
  in Section~\ref{sec:Conclusions}.
\par
 Throughout this paper, we use $C$
 (with or without subscripts) to denote a generic positive
 constant that is independent of $\beta$ and any mesh
 parameter.
  Also to avoid the proliferation of constants, we use the
   notation $A\lesssim B$ (or $A\gtrsim B$) to
  represent $A\leq \text{(constant)}B$, where the (hidden) positive constant is independent of
   $\beta$ and any mesh parameter. The notation $A\approx B$ is equivalent to
  $A\lesssim B$ and $B\lesssim A$.
%
%%%%%%%%%%%%%%%%%%%%%%%%%%%%%%%%%%%%%%%%%%%%
\section{A $P_1$ Finite Element Method}\label{sec:P1}
 We can express \eqref{subeq:BSPP} concisely as
\begin{equation}\label{eq:ConciseBSPP}
 \cB((\tilde{p},\tilde{y}),({q},{z}))
 =-\beta^{\frac{1}{4}}(y_d,{q})_\LT \qquad \forall\, ({q},{z})\in \ES,
\end{equation}
 where
\begin{equation}\label{eq:BDef}
 \cB((p,y),(q,z))=\beta^{\frac{1}{2}}(\nabla p,\nabla q)_\LT-(y,q)_\LT-(p,z)_\LT
 -\beta^{\frac{1}{2}}(\nabla y,\nabla z)_\LT.
\end{equation}
%
%%%%%%%%%%%%%%%%%%
\subsection{Properties of $\cB$}\label{subsec:B}
 We will analyze the bilinear form $\cB(\cdot,\cdot)$ in terms of
 the weighted $H^1$ norm $\|\cdot\|_\HB$ defined by
\begin{equation}\label{eq:HB}
 \|v\|^2_\HB=\|v\|^2_\LT+\beta^{\frac{1}{2}}|v|^2_\HO \qquad\forall\,v\in\HO.
\end{equation}
\par
 Let $(p,y)\in\HO\times\HO$ be arbitrary.
 It follows immediately from \eqref{eq:BDef}, \eqref{eq:HB} and the Cauchy-Schwarz inequality that
\begin{equation}\label{eq:BUpper}
\cB((p,y),(q,z))\le (\|p\|^2_\HB+\|y\|^2_\HB)^{\frac{1}{2}}(\|q\|^2_\HB+\|z\|^2_\HB)^{\frac{1}{2}}.
\end{equation}
 Moreover, a direct calculation shows that
\begin{align}
  \cB((p,y),(p-y,-y-p))&=\beta^\frac12(\nabla p,\nabla(p-y))_\LT-(y,p-y)_\LT\notag\\
      &\hspace{30pt}+(p,y+p)_\LT+\beta^\frac12(\nabla y,\nabla(y+p))_\LT  \label{eq:BId1}  \\
      &=\|p\|_\HB^2+\|y\|_\HB^2,\notag
      \intertext{and we also have, by the parallelogram law,}
     \|p-y\|_\HB^2+\|-y-p\|_\HB^2&=2(\|p\|_\HB^2+\|y\|_\HB^2). \label{eq:BId2}
\end{align}
\par
 It follows from \eqref{eq:BUpper}--\eqref{eq:BId2} that
\begin{align}\label{eq:InfSup}
   &\,(\|p\|^2_\HB+\|y\|^2_\HB)^\frac12\notag\\
  \geq&\, \sup_{(q,z)\in \ES}\frac{\cB((p,y),(q,z))}{(\|q\|^2_\HB
  +\|z\|^2_\HB)^\frac12}\\
  \geq&\, 2^{-\frac12}(\|p\|^2_\HB+\|y\|^2_\HB)^\frac{1}{2} &\quad \forall\,(p,y)\in\ES.
  \notag
\end{align}
%
%%%%%%%%%%%%%%%%%%%%%%%%%%%%
\subsection{The Discrete Problem}\label{subsec:Discrete}
 Let $\cT_h$ be a simplicial triangulation of $\O$ and $V_h\subset \HOz$ be the $P_1$ finite element space
  associated with $\cT_h$.
 The $P_1$ finite element method for \eqref{eq:ConciseBSPP}
  is to find $(\tilde{p}_h,\tilde{y}_h)\in \FS$ such that
\begin{equation}\label{eq:Discrete}
 \cB((\tilde{p}_h,\tilde{y}_h),({q}_h,{z}_h))=-\beta^\frac14(y_d,{q}_h)_\LT
  \qquad\forall\,({q}_h,{z}_h)\in \FS.
\end{equation}
\par
 For the convergence analysis of the multigrid algorithms, it is necessary to
 consider a more general problem: Find $(p,y)\in \ES$ such that
\begin{equation}\label{eq:GP}
\cB((p,y),(q,z))=(f,q)_\LT+(g,z)_\LT \qquad\forall\,(q,z)\in\ES,
\end{equation}
 where $f,g\in\LT$.  The unique solvability of \eqref{eq:GP} follows immediately from
 \eqref{eq:InfSup}.
\par
 The $P_1$ finite element method for \eqref{eq:GP} is to find
 $(p_h,y_h)\in V_h\times V_h$ such that
\begin{equation}\label{eq:DGP}
\cB((p_h,y_h),(q_h,z_h))=(f,q_h)_\LT+(g,z_h)_\LT \qquad\forall\,(q_h,z_h)\in \FS.
\end{equation}
 Note that \eqref{eq:BUpper}--\eqref{eq:BId2} also yield the following analog of
 \eqref{eq:InfSup}:
\begin{align}\label{eq:DiscreteInfSup}
 &\,(\|p_h\|^2_\HB+\|y_h\|^2_\HB)^\frac12\notag\\
 \geq&\,
 \sup_{(q_h,z_h)\in \FS}\frac{\cB((p_h,y_h),(q_h,z_h))}{(\|q_h\|^2_\HB
   +\|z_h\|^2_\HB)^\frac12}\\
  \geq&\, 2^{-\frac12}(\|p_h\|^2_\HB+\|y_h\|^2_\HB)^\frac12
  &\quad\forall\,(p_h,y_h)\in\FS.\notag
\end{align}
  Therefore the discrete problem \eqref{eq:DGP} is
 uniquely solvable.
%%%%%%%%%%%%%%%%%%%%%%%%%%
\subsection{Error Estimates for \eqref{eq:GP}}\label{subsec:Error}
 From \eqref{eq:BUpper}, \eqref{eq:DiscreteInfSup} and the saddle point theory
 \cite{Babushka:1973:LM,Brezzi:1974:SPP,XZ:2003:BB}, we have the following quasi-optimal
 error estimate.
\begin{lemma}\label{lem:QuasiOptimal}
  Let $(p,y)$ $($resp., $(p_h,y_h))$ be the solution of \eqref{eq:GP} $(resp., \eqref{eq:DGP})$.
  We have
\begin{equation}\label{eq:QuasiOptimal}
  \|p-p_h\|_\HB^2+\|y-y_h\|_\HB^2
  \leq 2\inf_{(q_h,z_h)\in V_h\times V_h}
     \big(\|p-q_h\|_\HB^2+\|y-z_h\|_\HB^2\big).
\end{equation}
\end{lemma}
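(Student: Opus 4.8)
The plan is to establish this as a standard Céa-type lemma exploiting the two-sided inf-sup control in \eqref{eq:InfSup} and \eqref{eq:DiscreteInfSup} together with the boundedness \eqref{eq:BUpper}, using Galerkin orthogonality as the bridge between the continuous and discrete solutions. First I would note that subtracting \eqref{eq:DGP} from \eqref{eq:GP} restricted to test functions $(q_h,z_h)\in\FS$ yields the Galerkin orthogonality relation $\cB((p-p_h,y-y_h),(q_h,z_h))=0$ for all $(q_h,z_h)\in\FS$, since the right-hand sides $(f,q_h)_\LT+(g,z_h)_\LT$ agree on the discrete test space.

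Next, fix an arbitrary $(q_h,z_h)\in\FS$ and write the discrete error as $(p_h-q_h,y_h-z_h)\in\FS$. I would apply the lower (discrete inf-sup) bound in \eqref{eq:DiscreteInfSup} to this element, obtaining
\begin{equation*}
 2^{-\frac12}\big(\|p_h-q_h\|^2_\HB+\|y_h-z_h\|^2_\HB\big)^\frac12
 \leq\sup_{(r_h,s_h)\in\FS}
   \frac{\cB((p_h-q_h,y_h-z_h),(r_h,s_h))}{\big(\|r_h\|^2_\HB+\|s_h\|^2_\HB\big)^\frac12}.
\end{equation*}
The key move is to insert the exact solution: using Galerkin orthogonality I rewrite $\cB((p_h-q_h,y_h-z_h),(r_h,s_h))=\cB((p-q_h,y-z_h),(r_h,s_h))$, because $\cB((p-p_h,y-y_h),(r_h,s_h))=0$ for the discrete test pair $(r_h,s_h)$. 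Now the boundedness estimate \eqref{eq:BUpper} bounds the numerator by $\big(\|p-q_h\|^2_\HB+\|y-z_h\|^2_\HB\big)^\frac12\big(\|r_h\|^2_\HB+\|s_h\|^2_\HB\big)^\frac12$, so the supremum collapses and yields
\begin{equation*}
 \big(\|p_h-q_h\|^2_\HB+\|y_h-z_h\|^2_\HB\big)^\frac12
 \leq 2^{\frac12}\big(\|p-q_h\|^2_\HB+\|y-z_h\|^2_\HB\big)^\frac12.
\end{equation*}

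Finally I would combine this via the triangle inequality in the product norm, writing $p-p_h=(p-q_h)-(p_h-q_h)$ and similarly for the $y$ component. A direct triangle-inequality bound gives an error constant like $(1+\sqrt2)^2$, which overshoots the claimed factor of $2$; so the cleaner route is to estimate $\|p-p_h\|^2_\HB+\|y-y_h\|^2_\HB$ by decomposing the error into its best-approximation part and its discrete-correction part and invoking Galerkin orthogonality to control the cross terms, then taking the infimum over $(q_h,z_h)\in\FS$. I expect the main obstacle to be purely bookkeeping: tracking the constants through the product-space norm so that the two factors $2^{-1/2}$ from the inf-sup and $1$ from boundedness combine to give exactly the stated constant $2$ rather than a larger one, which likely requires choosing $(q_h,z_h)$ to realize (or approach) the infimum and using an orthogonality-based Pythagorean split rather than a naive triangle inequality.
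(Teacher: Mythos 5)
Your ingredients are the right ones---they are exactly what the paper's one-line proof invokes, namely Galerkin orthogonality, the boundedness \eqref{eq:BUpper} (constant $1$) and the discrete inf-sup bound \eqref{eq:DiscreteInfSup} (constant $2^{-1/2}$)---and your intermediate estimate
\begin{equation*}
\big(\|p_h-q_h\|_\HB^2+\|y_h-z_h\|_\HB^2\big)^{\frac12}
\le \sqrt{2}\,\big(\|p-q_h\|_\HB^2+\|y-z_h\|_\HB^2\big)^{\frac12}
\end{equation*}
is correct. The gap is the final step, and it is not mere bookkeeping. The triangle inequality gives $(1+\sqrt2)^2$, as you concede, but the fix you gesture at cannot close the gap: the only orthogonality available is $\cB$-orthogonality, and $\cB$ is an \emph{indefinite} saddle point form, so $\cB$-orthogonality of the error to $\FS$ yields no Pythagorean identity for the $\HB\times\HB$ norm (that device works for symmetric \emph{coercive} forms, where the energy norm is the relevant inner product). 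The Pythagorean split you can legitimately use is the one attached to the best approximation: if $(q_h,z_h)$ is the orthogonal projection of $(p,y)$ onto $\FS$ in the product $\HB$ inner product, the cross terms vanish and
\begin{equation*}
\|p-p_h\|_\HB^2+\|y-y_h\|_\HB^2
=\big(\|p-q_h\|_\HB^2+\|y-z_h\|_\HB^2\big)
+\big(\|q_h-p_h\|_\HB^2+\|z_h-y_h\|_\HB^2\big),
\end{equation*}
and inserting your (squared) intermediate estimate into the second term produces the constant $1+2=3$. So your outline, even executed optimally, proves \eqref{eq:QuasiOptimal} with $3$ in place of $2$.

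The missing idea is the one behind the paper's citation of \cite{XZ:2003:BB}. Write $\|(v,w)\|_\beta=(\|v\|_\HB^2+\|w\|_\HB^2)^{1/2}$ and let $\Pi_h:\ES\to\FS$ be the Galerkin projection defined by $\cB(\Pi_h(v,w),(r_h,s_h))=\cB((v,w),(r_h,s_h))$ for all $(r_h,s_h)\in\FS$; it is well defined by \eqref{eq:DiscreteInfSup}, satisfies $(p_h,y_h)=\Pi_h(p,y)$, and obeys $\|\Pi_h(v,w)\|_\beta\le\sqrt2\,\|(v,w)\|_\beta$ by combining \eqref{eq:DiscreteInfSup} with \eqref{eq:BUpper}, exactly as in your step for the discrete correction. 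The crucial extra fact is the operator-norm identity $\|I-\Pi_h\|=\|\Pi_h\|$, valid for any idempotent operator on a Hilbert space other than $0$ and the identity $I$ (Kato's lemma; this is the main observation of \cite{XZ:2003:BB}). Since $I-\Pi_h$ annihilates $\FS$, for every $(q_h,z_h)\in\FS$ we get
\begin{equation*}
\|(p-p_h,y-y_h)\|_\beta
=\|(I-\Pi_h)(p-q_h,y-z_h)\|_\beta
\le\|\Pi_h\|\,\|(p-q_h,y-z_h)\|_\beta
\le\sqrt2\,\|(p-q_h,y-z_h)\|_\beta,
\end{equation*}
and squaring and taking the infimum gives \eqref{eq:QuasiOptimal} with the stated constant $2$. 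In short: inf-sup, boundedness and Galerkin orthogonality combined by hand top out at $3$; the constant $2$ requires the Xu--Zikatanov identity, which is the content hiding in the paper's citation.
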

\par
 In order to convert \eqref{eq:QuasiOptimal} into a concrete error estimate, we need
 the regularity of the solution of \eqref{eq:GP}.
\begin{lemma}\label{lem:Regularity}
 The solution $(p,y)$ of \eqref{eq:GP} belongs to $H^2(\O)\times H^2(\O)$ and
 we have
\begin{equation}\label{eq:Regularity}
 \|\beta^{\frac{1}{2}}p\|_{H^2(\O)}+\|\beta^{\frac{1}{2}}y\|_{H^2(\O)}\leq
  C_{\O}(\|f\|_\LT+\|g\|_\LT).
\end{equation}
\end{lemma}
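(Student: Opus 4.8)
The plan is to decouple the saddle point system \eqref{eq:GP} into two scalar Poisson problems for the rescaled unknowns $\beta^{\frac{1}{2}}p$ and $\beta^{\frac{1}{2}}y$, to which the standard $H^2$ regularity theory on a convex polygonal domain applies. Testing \eqref{eq:GP} with $(q,0)$ and recalling \eqref{eq:BDef}, we find that $\beta^{\frac{1}{2}}(\nabla p,\nabla q)_\LT=(f+y,q)_\LT$ for all $q\in\HOz$, so that $\beta^{\frac{1}{2}}p\in\HOz$ is the weak solution of
\[
 -\Delta(\beta^{\frac{1}{2}}p)=f+y \quad\text{in }\O .
\]
Likewise, testing with $(0,z)$ gives $\beta^{\frac{1}{2}}(\nabla y,\nabla z)_\LT=-(g+p,z)_\LT$, so that $\beta^{\frac{1}{2}}y\in\HOz$ is the weak solution of
\[
 -\Delta(\beta^{\frac{1}{2}}y)=-(g+p) \quad\text{in }\O .
\]
Since $\O$ is convex and polygonal, the standard elliptic regularity estimate (see, e.g., \cite{BScott:2008:FEM}) yields $\beta^{\frac{1}{2}}p,\beta^{\frac{1}{2}}y\in H^2(\O)$ together with
\[
 \|\beta^{\frac{1}{2}}p\|_{H^2(\O)}\le C_\O\|f+y\|_\LT,\qquad
 \|\beta^{\frac{1}{2}}y\|_{H^2(\O)}\le C_\O\|g+p\|_\LT,
\]
where $C_\O$ depends only on $\O$ and in particular is independent of $\beta$.

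It remains to control $\|y\|_\LT$ and $\|p\|_\LT$ by the data in a $\beta$-independent manner, and for this I would invoke the stability furnished by the inf-sup condition \eqref{eq:InfSup}. Because $(p,y)$ solves \eqref{eq:GP}, the numerator in \eqref{eq:InfSup} equals $(f,q)_\LT+(g,z)_\LT$, which by the Cauchy-Schwarz inequality and the trivial bounds $\|q\|_\LT\le\|q\|_\HB$, $\|z\|_\LT\le\|z\|_\HB$ is at most $(\|f\|_\LT^2+\|g\|_\LT^2)^{\frac12}(\|q\|_\HB^2+\|z\|_\HB^2)^{\frac12}$. Feeding this into the lower bound in \eqref{eq:InfSup} gives
\[
 (\|p\|_\HB^2+\|y\|_\HB^2)^{\frac12}\le 2^{\frac12}(\|f\|_\LT^2+\|g\|_\LT^2)^{\frac12},
\]
and in particular $\|p\|_\LT+\|y\|_\LT\lesssim\|f\|_\LT+\|g\|_\LT$ with a constant independent of $\beta$.

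Combining the two groups of estimates, I would bound $\|f+y\|_\LT\le\|f\|_\LT+\|y\|_\LT\lesssim\|f\|_\LT+\|g\|_\LT$ and $\|g+p\|_\LT\lesssim\|f\|_\LT+\|g\|_\LT$, and then add the two regularity inequalities to arrive at \eqref{eq:Regularity}. The only genuinely nonelementary ingredient is the $H^2$ regularity of the Dirichlet Laplacian on a convex polygon; everything else is bookkeeping. The point worth emphasizing is that the rescaling \eqref{eq:CV} is precisely what keeps both the regularity constant $C_\O$ and the stability constant independent of $\beta$: the factor $\beta^{\frac12}$ has been absorbed into the unknowns, so the decoupled problems are plain Poisson problems with $O(1)$ coefficients.
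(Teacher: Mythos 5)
Your proposal is correct and follows essentially the same route as the paper: decouple \eqref{eq:GP} into two Poisson problems for $\beta^{\frac12}p$ and $\beta^{\frac12}y$, apply convex-domain elliptic regularity, and close the argument with the $\beta$-independent $L_2$ stability bound obtained from \eqref{eq:HB}, \eqref{eq:InfSup} and \eqref{eq:GP}. The only difference is that you spell out the inf-sup stability step that the paper states in one line as \eqref{eq:LTBdd}.
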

\begin{proof} We can write \eqref{eq:GP} as
\begin{alignat*}{3}
 (\nabla(\beta^{\frac{1}{2}}p),\nabla q)_\LT&=(y+f,q)_\LT &\qquad&\forall\,q\in \HOz,\\
(\nabla(\beta^{\frac{1}{2}}y),\nabla z)_\LT&=(-p-g,z)_\LT.&\qquad&\forall\,z\in\HOz,
\end{alignat*}
 and hence, by the elliptic regularity theory for convex domains
 \cite{Grisvard:1985:EPN,Dauge:1988:EBV},
\begin{subequations}\label{subeq:Regularity}
\begin{align}
\|\beta^{\frac{1}{2}}p\|_{H^2(\O)}&\leq C_{\Omega}(\|y\|_\LT+\|f\|_\LT),\label{eq:Rp}\\
\|\beta^{\frac{1}{2}}y\|_{H^2(\O)}&\leq C_{\Omega}(\|p\|_\LT+\|g\|_\LT).\label{eq:Ry}
\end{align}
\end{subequations}
\par
 From \eqref{eq:HB}, \eqref{eq:InfSup} and \eqref{eq:GP} we also have
\begin{equation}\label{eq:LTBdd}
  \|p\|_\LT^2+\|y\|_\LT^2 \leq 2 (\|f\|_\LT^2+\|g\|_\LT^2).
\end{equation}
\par
 The estimate \eqref{eq:Regularity} follows from \eqref{subeq:Regularity} and
 \eqref{eq:LTBdd}.
\end{proof}
\par
 We can now derive concrete error estimates for the $P_1$ finite element method for
 \eqref{eq:GP}.
\begin{lemma}\label{lem:ConcreteErrors}
  Let $(p,y)$ $($resp., $(p_h,y_h))$ be the solution of \eqref{eq:GP} $(resp., \eqref{eq:DGP})$.
  We have
\begin{align}
  \|p-p_h\|_\HB+\|y-y_h\|_\HB&\leq C(1+{\beta}^{\frac{1}{2}}h^{-2})^{\frac{1}{2}}
  \beta^{-\frac{1}{2}}  h^2(\|f\|_\LT+\|g\|_\LT), \label{eq:EnergyError}\\
 \|p-p_h\|_\LT+\|y-y_h\|_\LT&\leq C(1+{\beta}^{\frac{1}{2}}h^{-2})\beta^{-1}h^4
 (\|f\|_\LT+\|g\|_\LT),  \label{eq:LTError}
\end{align}
 where the positive constant $C$ is independent of $\beta$ and $h$.
\end{lemma}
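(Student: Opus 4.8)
The plan is to establish the energy estimate \eqref{eq:EnergyError} first, by combining the quasi-optimal bound of Lemma~\ref{lem:QuasiOptimal} with standard $P_1$ interpolation error estimates and the regularity of Lemma~\ref{lem:Regularity}, and then to obtain the $L_2$ estimate \eqref{eq:LTError} through an Aubin--Nitsche duality argument that feeds \eqref{eq:EnergyError} back into itself.

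For \eqref{eq:EnergyError} I would let $I_h$ denote the nodal interpolation operator onto $V_h$, which is well defined since $H^2(\O)\hookrightarrow C^0(\bar\O)$ for $d\le 3$. For any $w\in H^2(\O)\cap\HOz$ the classical estimates $\|w-I_hw\|_\LT\lesssim h^2|w|_\HO$-type bounds (with $H^2$ seminorm) combine with the definition \eqref{eq:HB} to give
\[
 \|w-I_hw\|_\HB^2\lesssim h^4|w|_{H^2(\O)}^2+\beta^\frac12h^2|w|_{H^2(\O)}^2
  =h^2(h^2+\beta^\frac12)|w|_{H^2(\O)}^2.
\]
Applying this to $w=p$ and $w=y$, using $|p|_{H^2(\O)}+|y|_{H^2(\O)}\lesssim\beta^{-\frac12}(\|f\|_\LT+\|g\|_\LT)$ from Lemma~\ref{lem:Regularity}, and inserting the result into \eqref{eq:QuasiOptimal} with $(q_h,z_h)=(I_hp,I_hy)$, I obtain $\|p-p_h\|_\HB+\|y-y_h\|_\HB\lesssim h(h^2+\beta^\frac12)^\frac12\beta^{-\frac12}(\|f\|_\LT+\|g\|_\LT)$. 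Since $h(h^2+\beta^\frac12)^\frac12=h^2(1+\beta^\frac12h^{-2})^\frac12$, this is exactly \eqref{eq:EnergyError}.

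For \eqref{eq:LTError} I would write $e_p=p-p_h$ and $e_y=y-y_h$ and exploit the symmetry of $\cB$. The dual problem of finding $(\phi,\psi)\in\ES$ with $\cB((q,z),(\phi,\psi))=(e_p,q)_\LT+(e_y,z)_\LT$ for all $(q,z)\in\ES$ is then again of the form \eqref{eq:GP} with data $(e_p,e_y)$, so Lemma~\ref{lem:Regularity} yields $\|\beta^\frac12\phi\|_{H^2(\O)}+\|\beta^\frac12\psi\|_{H^2(\O)}\lesssim\|e_p\|_\LT+\|e_y\|_\LT$. Choosing $(q,z)=(e_p,e_y)$, using the Galerkin orthogonality $\cB((e_p,e_y),(q_h,z_h))=0$ (obtained by subtracting \eqref{eq:DGP} from \eqref{eq:GP}) to replace $(\phi,\psi)$ by $(\phi-I_h\phi,\psi-I_h\psi)$, and applying the boundedness \eqref{eq:BUpper}, I get
\[
 \|e_p\|_\LT^2+\|e_y\|_\LT^2\lesssim\big(\|e_p\|_\HB^2+\|e_y\|_\HB^2\big)^\frac12
   \big(\|\phi-I_h\phi\|_\HB^2+\|\psi-I_h\psi\|_\HB^2\big)^\frac12.
\]
The first factor is bounded by the energy estimate \eqref{eq:EnergyError} already proved, while the second factor is bounded by the interpolation inequality above applied to $(\phi,\psi)$ together with the dual regularity, giving the factor $(1+\beta^\frac12h^{-2})^\frac12\beta^{-\frac12}h^2(\|e_p\|_\LT+\|e_y\|_\LT)$. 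Dividing through by $(\|e_p\|_\LT^2+\|e_y\|_\LT^2)^\frac12$ then produces \eqref{eq:LTError}.

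The main obstacle is the careful bookkeeping of the $\beta$-weights through the norm $\|\cdot\|_\HB$: the interpolation estimates must be combined with the $\beta^\frac12$-weighted $H^2$ regularity so that the powers of $\beta$ and $h$ assemble precisely into the factors $(1+\beta^\frac12h^{-2})^\frac12\beta^{-\frac12}h^2$ and $(1+\beta^\frac12h^{-2})\beta^{-1}h^4$. The conceptual crux is recognizing that the symmetry of $\cB$ makes the dual problem fall under Lemma~\ref{lem:Regularity} with the error itself as data, so that the energy estimate can be bootstrapped into the duality step without losing the uniformity in $\beta$.
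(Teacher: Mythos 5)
Your proposal is correct and follows essentially the same route as the paper: the energy estimate via quasi-optimality, nodal interpolation, and the $\beta$-weighted $H^2$ regularity, followed by an Aubin--Nitsche duality argument that bootstraps the energy bound. The only cosmetic difference is that you place the dual solution in the second argument of $\cB$ and invoke symmetry, whereas the paper defines $(\xi,\theta)$ directly in the first slot so that Lemma~\ref{lem:Regularity} applies verbatim; the two formulations coincide since $\cB$ is symmetric.
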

\begin{proof} Let $\Pi_h:H^2(\O)\cap \HOz\longrightarrow V_h$ be the
 nodal interpolation operator.  We have the following standard interpolation
 error estimate \cite{Ciarlet:1978:FEM,BScott:2008:FEM}:
\begin{equation}\label{eq:PihEst}
 \|\zeta-\Pi_h\zeta\|_\LT+h|\zeta-\Pi_h\zeta|_\HO\leq Ch^2|\zeta|_{H^2(\O)}
 \qquad\forall\,\zeta\in H^2(\O)\cap\HOz,
\end{equation}
 where the positive constant $C$ only depends on the shape regularity of $\cT_h$.
\par
 The estimate \eqref{eq:EnergyError} follows from \eqref{eq:HB},
 \eqref{eq:QuasiOptimal}, \eqref{eq:Regularity}  and \eqref{eq:PihEst}:
\begin{align*}
 &\|p-p_h\|_\HB^2+\|y-y_h\|_\HB^2
   \leq 2\big(\|p-\Pi_hp\|_\HB^2+\|y-\Pi_hy\|_\HB^2\big)\\
   &\hspace{40pt}\leq 2\big(\|p-\Pi_hp\|_\LT^2+\beta^\frac12|p-\Pi_hp|_\HO^2
      +\|y-\Pi_hy\|_\LT^2+\beta^\frac12|y-\Pi_hy|_\HO^2\big)
   \\
   &\hspace{40pt}\leq C(\beta^{-1}h^4+\beta^{-\frac12}h^2)(\|f\|_\LT^2+\|g\|_\LT^2)\\
   &\hspace{40pt}= C(1+\beta^\frac12 h^{-2})\beta^{-1}h^4(\|f\|_\LT^2+\|g\|_\LT^2).
\end{align*}
\par
 The estimate \eqref{eq:LTError} is established by a duality argument.
 Let $(\xi,\theta)\in \ES$ be defined by
\begin{equation}\label{eq:Duality1}
 \cB((\xi,\theta),(q,z))=(p-p_h,q)_\LT+(y-y_h,z)_\LT \qquad \forall\, (q,z)\in \ES.
\end{equation}
 We have, by \eqref{eq:BUpper}, Lemma~\ref{lem:Regularity} (applied to \eqref{eq:Duality1}),
  \eqref{eq:PihEst}, \eqref{eq:Duality1} and Galerkin orthogonality,
\begin{align*}
 &\|p-p_h\|^2_\LT+\|y-y_h\|^2_\LT=(p-p_h,p-p_h)_\LT+(y-y_h,y-y_h)_\LT\\
  &\hspace{40pt}=\cB((\xi,\theta),(p-p_h,y-y_h))\\
 &\hspace{40pt}=\cB((\xi-\Pi_h\xi,\theta-\Pi_h\theta),(p-p_h,y-y_h))\\
 &\hspace{40pt}\leq(\|\xi-\Pi_h\xi\|^2_\HB+\|\theta-\Pi_h\theta\|^2_\HB)^{\frac{1}{2}}
  (\|p-p_h\|^2_\HB+\|y-y_h\|^2_\HB)^{\frac{1}{2}}\\
  &\hspace{40pt}\leq
  C(1+{\beta}^{\frac{1}{2}}h^{-2})^\frac12\beta^{-\frac12}h^2
  (\|p-p_h\|_\LT^2+\|y-y_h\|_\LT^2)^\frac12\\
      &\hspace{70pt}\times  (\|p-p_h\|^2_\HB+\|y-y_h\|^2_\HB)^{\frac{1}{2}},
\end{align*}
 which together with \eqref{eq:EnergyError} implies \eqref{eq:LTError}.
\end{proof}
%%%%%%%%%%%%%%%%%%%
\subsection{A $\bm{P_1}$ Finite Element Method  for \eqref{subeq:SPP}}\label{eq:OriginalError}
 The $P_1$ finite element method for \eqref{subeq:SPP} is to find
 $(\bar{p}_h,\bar{y}_h)\in\FS$ such that
\begin{subequations}\label{subeq:OriginalP1}
\begin{alignat}{3}
 (\nabla\bar{p}_h,\nabla q_h)_\LT-(\bar{y}_h,q_h)_\LT&=-(y_d,q_h)_\LT
 &\qquad& \forall \, q_h\in V_h,\\
-(\bar{p}_h,z_h)_\LT-\beta(\nabla \bar{y}_h,\nabla z_h)_\LT&=0
&\qquad& \forall \, z_h \in V_h,
\end{alignat}
\end{subequations}
 which is equivalent to \eqref{eq:Discrete}
 under the change of variables
\begin{equation}\label{eq:CV2}
\bar{p}_h=\beta^{\frac{1}{4}}\tilde{p}_h\quad\text{and}\quad
\bar{y}_h=\beta^{-\frac{1}{4}}\tilde{y}_h.
\end{equation}
\par
 Applying the results in Section~\ref{subsec:Error} to
 \eqref{eq:ConciseBSPP} and \eqref{eq:Discrete},
 we arrive at the following error estimates through the change of variables
 \eqref{eq:CV} and \eqref{eq:CV2}.
\begin{lemma}\label{lem:OriginalErrors}
 Let $(\bar p,\bar y)$ $($resp., $(\bar{p}_h,\bar{y}_h))$ be the solution of \eqref{subeq:SPP}
 $($resp., \eqref{subeq:OriginalP1}$)$.  We have
\begin{align*}
 \|\bar{p}-\bar{p}_h\|_\HB+\beta^{\frac{1}{2}}\|\bar{y}-\bar{y}_h\|_\HB
  &\leq C(1+{\beta}^{\frac{1}{2}}h^{-2})^{\frac{1}{2}}h^2\|y_d\|_\LT,\\
 \|\bar{p}-\bar{p}_h\|_\LT+\beta^{\frac{1}{2}}\|\bar{y}-\bar{y}_h\|_\LT
 &\leq C(1+{\beta}^{\frac{1}{2}}h^{-2})\beta^{-\frac{1}{2}}h^4\|y_d\|_\LT,
\end{align*}
 where the positive constant $C$ is independent of $\beta$ and $h$.
\end{lemma}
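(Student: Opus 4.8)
The plan is to reduce everything to Lemma~\ref{lem:ConcreteErrors} by recognizing the balanced problems \eqref{eq:ConciseBSPP} and \eqref{eq:Discrete} as particular instances of the generic problems \eqref{eq:GP} and \eqref{eq:DGP}, and then to translate the resulting estimates back to the original variables through the change of variables \eqref{eq:CV} and \eqref{eq:CV2}. The only genuine work is bookkeeping of the powers of $\beta$.

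First I would identify the data. Comparing the right-hand side of \eqref{eq:ConciseBSPP} with that of \eqref{eq:GP}, the balanced pair $(\tilde{p},\tilde{y})$ and its discrete counterpart $(\tilde{p}_h,\tilde{y}_h)$ solve \eqref{eq:GP} and \eqref{eq:DGP} with $f=-\beta^{1/4}y_d$ and $g=0$, so that $\|f\|_\LT+\|g\|_\LT=\beta^{1/4}\|y_d\|_\LT$. Substituting this into \eqref{eq:EnergyError} and \eqref{eq:LTError} yields
\begin{align*}
\|\tilde{p}-\tilde{p}_h\|_\HB+\|\tilde{y}-\tilde{y}_h\|_\HB
 &\leq C(1+\beta^{1/2}h^{-2})^{1/2}\beta^{-1/4}h^2\|y_d\|_\LT,\\
\|\tilde{p}-\tilde{p}_h\|_\LT+\|\tilde{y}-\tilde{y}_h\|_\LT
 &\leq C(1+\beta^{1/2}h^{-2})\beta^{-3/4}h^4\|y_d\|_\LT,
\end{align*}
the factor $\beta^{1/4}$ from the data having been absorbed into the exponents $-\tfrac12+\tfrac14=-\tfrac14$ and $-1+\tfrac14=-\tfrac34$.

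Next I would pass back to the original variables. From \eqref{eq:CV} and \eqref{eq:CV2} we have $\bar{p}-\bar{p}_h=\beta^{1/4}(\tilde{p}-\tilde{p}_h)$ and $\bar{y}-\bar{y}_h=\beta^{-1/4}(\tilde{y}-\tilde{y}_h)$. Since $\|\cdot\|_\HB$ and $\|\cdot\|_\LT$ scale linearly under multiplication by the constant $\beta^{\pm1/4}$, the weighting by $\beta^{1/2}$ on the $\bar{y}$ terms in the statement is exactly what restores a common factor: $\|\bar{p}-\bar{p}_h\|_\HB=\beta^{1/4}\|\tilde{p}-\tilde{p}_h\|_\HB$, while $\beta^{1/2}\|\bar{y}-\bar{y}_h\|_\HB=\beta^{1/2}\beta^{-1/4}\|\tilde{y}-\tilde{y}_h\|_\HB=\beta^{1/4}\|\tilde{y}-\tilde{y}_h\|_\HB$, and likewise for the $L_2$ norms. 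Both estimates therefore acquire an overall $\beta^{1/4}$ that cancels the $\beta^{-1/4}$ (respectively combines with $\beta^{-3/4}$ to give $\beta^{-1/2}$) above, producing precisely the two displayed bounds with $\beta$-exponents $0$ and $-\tfrac12$.

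I do not expect any genuine obstacle; the argument is purely algebraic once Lemma~\ref{lem:ConcreteErrors} is in hand. The one place to be careful is to verify that the $\beta^{1/2}$ weight in front of the $\bar{y}$ terms matches the $\beta^{-1/4}$ dilation of $\tilde{y}$ rather than working against it, so that the $\bar{p}$ and $\bar{y}$ contributions carry the same power of $\beta$ and can be combined under a single generic constant $C$.
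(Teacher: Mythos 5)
Your proposal is correct and is exactly the paper's argument: the paper proves this lemma by applying Lemma~\ref{lem:ConcreteErrors} to \eqref{eq:ConciseBSPP} and \eqref{eq:Discrete} (i.e., \eqref{eq:GP}--\eqref{eq:DGP} with $f=-\beta^{1/4}y_d$, $g=0$) and then translating back through the change of variables \eqref{eq:CV} and \eqref{eq:CV2}. Your bookkeeping of the $\beta$-exponents, including the cancellation of $\beta^{1/4}$ against $\beta^{-1/4}$ and the combination $\beta^{1/4}\cdot\beta^{-3/4}=\beta^{-1/2}$, matches the stated bounds exactly.
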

\par
 According to Lemma~\ref{lem:OriginalErrors}, the performance of the $P_1$ finite element method
 defined by \eqref{subeq:OriginalP1} will deteriorate as $\beta\downarrow0$.    Indeed it can be
 shown that
\begin{equation*}
 \frac{|\bar p-\bar p_h|_\HO}{|\bar p|_\HO},\;
  \frac{|\bar y-\bar y_h|_\HO}{|\bar y|_\HO}\leq C
     \beta^{-\frac14}h \quad\text{and}\quad
    \frac{\|\bar p-\bar p_h\|_\LT}{\|\bar p\|_\LT},\;
     \frac{\|\bar y-\bar y_h\|_\LT}{\|\bar y\|_\LT}\leq C
    \beta^{-\frac12}h^2
\end{equation*}
 as $h\downarrow0$, where the positive constant $C$ is independent of $\beta$ and $h$.
 This phenomenon is due to
  the mismatch between the homogeneous Dirichlet boundary condition for $y$  and
 the fact that $y_d$ only belongs to $\LT$.
%\par
  In the case where $y_d\in\HOz\cap H^2(\O)$, the estimates for the
  asymptotic  relative  errors can be  improved to
\begin{equation*}
  \frac{|\bar p-\bar p_h|_\HO}{|\bar p|_\HO},\; \frac{|\bar y-\bar y_h|_\HO}{|\bar y|_\HO}\leq C
     h \quad\text{and}\quad
    \frac{\|\bar p-\bar p_h\|_\LT}{\|\bar p\|_\LT},\;
    \frac{\|\bar y-\bar y_h\|_\LT}{\|\bar y\|_\LT}\leq C h^2.
\end{equation*}
\par
 The performance of the $P_1$ finite element method is illustrated in the following
 example.
\begin{example}\label{example:BoundaryLayer}
  We solve \eqref{subeq:SPP} by the $P_1$ finite
  element method defined by \eqref{subeq:OriginalP1} on the unit square $\O=(0,1)\times (0,1)$
  for $y_d=1$ and $y_d=x_1(1-x_1)x_2(1-x_2)$.   In both cases the exact solution can be found
  in the form of a double Fourier sine series.
   The relative errors for $h=2^{-6}$ and
  various $\beta$ together with the solution times (in seconds) are displayed in
  Table~\ref{table:P1Errors}.  The numerical solutions are obtained by a full multigrid method
  (cf. \cite[Section~6.7]{BScott:2008:FEM})
  using the symmetric $W$-cycle algorithm from Section~\ref{sec:MG}
  with 2 pre-smoothing and 2 post-smoothing steps,
  where the preconditioner
  $\fC_k^{-1}$ in the  smoothing steps is based on
  a $V(4,4)$ multigrid solve for the boundary value problem \eqref{eq:BVP}.
  The full multigrid iteration at each level is terminated when the relative residual error is
  $\leq 10^{-8}$.
 \end{example}
\begin{table}[h]
\begin{tabular}{|c||c|c||c|c|c|}\hline&&&&&\\[-11pt]
$\beta$& $\d\frac{|\bar{p}-\bar p_h|_\HO}{|\bar{p}|_\HO}$&
$\d\frac{\|\bar{p}-\bar p_h\|_\LT}{\|\bar{p}\|_\LT}$
 & $\d\frac{|\bar{y}-\bar y_h|_\HO}{|\bar{y}|_\HO}$
&$\d\frac{\|\bar{y}-\bar y_h\|_\LT}{\|\bar{y}\|_\LT}$& Time \\[10pt]
\hline
\multicolumn{6}{|c|}{\rule{0pt}{2.5ex}$y_d=1$}\\[2pt]
\hline &&&&&\\[-11pt]
$10^{-2}$&1.65e-02&6.92e-04&1.17e-02&6.31e-04
&7.59e\splus00\\
\hline &&&&&\\[-11pt]
$10^{-4}$ &5.62e-02&4.64e-03&1.92e-02&9.29e-04
&8.20e\splus00\\
\hline &&&&&\\[-11pt]
$10^{-6}$&1.87e-01&3.99e-02&6.13e-01&4.51e-03
&2.53e\splus01\\
\hline
\multicolumn{6}{|c|}{\rule{0pt}{2.5ex}$y_d=x_1(1-x_1)x_2(1-x_2)$}\\[2pt]
\hline &&&&&\\[-11pt]
 $10^{-2}$&1.16e-02&2.65e-04&1.16e-02&4.26e-04
 &5.15e\splus00\\
 \hline &&&&&\\[-11pt]
 $10^{-4}$&1.47e-02&1.88e-04&1.17e-02&1.92e-04
 &6.12e\splus00\\
   \hline &&&&&\\[-11pt]
 $10^{-6}$&4.55e-02&8.82e-04&1.22e-02&1.86e-04
 &1.46e\splus01\\
 \hline
\end{tabular}
\par\bigskip
 \caption{Relative errors and solution times of the $P_1$ finite element method defined by
   \eqref{subeq:OriginalP1} for $y_d=1$ and
   $y_d=x_1(1-x_1)x_2(1-x_2)$,  with $h=2^{-6}$ and $\beta=10^{-2}, 10^{-2}, 10^{-6}$}
\label{table:P1Errors}
\end{table}
\begin{remark}\label{rem:OptimalControl}
  We can approximate the optimal control $\bar u$ in \eqref{eq:OCP} by
   $\bar u_h=-\beta^{-1}\bar p_h$.  It then follows from \eqref{eq:Lions2} that the relative error for
   $\bar u_h$ is identical to the relative error for $\bar p_h$.
\end{remark}
%%%%%%%%%%%%%%%%%%%%%%%%%%%%%%%%%%%%%%%%%%%%
\section{Multigrid Algorithms}\label{sec:MG}
 Let $\cT_0$ be a triangulation of $\O$ and
 the triangulations $\cT_1, \cT_2, ...$  be generated from $\cT_0$ through a refinement process
 so that $h_k= h_{k-1}/2$ and the shape regularity of $\cT_k$ is inherited from the
 shape regularity of $\cT_0$.
 The $P_1$ finite element subspace of $H^1_0(\O)$ associated with $\cT_k$ is denoted by $V_k$.
\par
  We want to design multigrid methods for problems of the form
\begin{equation}\label{eq:kthProblem}
 \cB((p,y),(q,z))=F(q)+G(z)\qquad \forall\, (q,z)\in \FE{k},
\end{equation}
 where $F,G\in V_k'$.
%%%%%%%%%%%%%%%%%%%%%%%%%%
\subsection{A Mesh-Dependent Inner Product}\label{subsec:MDIP}
 It is convenient to use a mesh-dependent inner product on
 $\FE{k}$ to rewrite \eqref{eq:kthProblem} in terms of an operator
 that maps $\FE{k}$ to $\FE{k}$.
 First we introduce a mesh-dependent inner product  on $V_k$:
\begin{equation}\label{eq:VkIP}
 (v,w)_k=h_k^2\sum_{x\in\cV_k}v(x)w(x) \qquad\forall\,v,w\in V_k,
\end{equation}
 where $\cV_k$ is the set of the interior vertices of $\cT_k$.
  We have
\begin{equation}\label{eq:LTEquivalence}
 (v,v)_k\approx\|v\|^2_\LT   \qquad \forall \, v\in V_k
\end{equation}
 by a standard scaling argument \cite{Ciarlet:1978:FEM,BScott:2008:FEM}, where the hidden constants
 only depend on the shape regularity of $\cT_0$.
\par
  We then define the mesh-dependent inner product $[\cdot,\cdot]_k$ on $\FE{k}$  by
\begin{equation}\label{eq:MDIP}
 [(p,y),(q,z)]_k=(p,q)_k+(y,z)_k.
\end{equation}
\par
 Let the operator $\fB_k:\FE{k}\longrightarrow \FE{k}$ be defined by
\begin{equation}\label{eq:fBk}
 [\fB_k(p,y),(q,z)]_k=\cB((p,y),(q,z)) \qquad \forall \, (p,y),(q,z) \in \FE{k}.
\end{equation}
 We can then rewrite \eqref{eq:kthProblem} in the form
\begin{equation}\label{eq:kthEq}
  \fB_k(p,y)=(f,g),
\end{equation}
 where $(f,g)\in \FE{k}$ is defined by
\begin{equation*}
  [(f,g),(q,z)]_k=F(q)+G(z) \qquad\forall\,(q,z)\in\FE{k}.
\end{equation*}
\par
 We take  the coarse-to-fine operator $I^k_{k-1}: \FE{k-1}\longrightarrow \FE{k}$ to be
   the natural injection and define the fine-to-coarse operator
    $I^{k-1}_k:\FE{k}\longrightarrow \FE{k-1}$ to be the transpose of $I^k_{k-1}$
     with respect to the mesh-dependent inner products,  i.e.,
\begin{equation}\label{eq:FTC}
 [I^{k-1}_k(p,y),(q,z)]_{k-1}=[(p,y),I^k_{k-1}(q,z)]_k \quad \forall\, (p,y)\in \FE{k}, (q,z)\in \FE{k-1}.
\end{equation}
%
%%%%%%%%%%%%%%%%%%%%%%%%%%%%%%
\subsection{A Block-Diagonal Preconditioner}\label{subsec:BDP}
 Let $L_k:V_k\longrightarrow V_k$ be a linear  operator  symmetric with respect to
 the inner product $(\cdot,\cdot)_k$ on $V_k$ such that
\begin{equation}\label{eq:Lk}
  (L_k v,v)_k\approx \|v\|_\HB^2=\|v\|_\LT^2+\beta^\frac12|v|_\HO^2 \qquad \forall\,v\in V_k.
 \end{equation}
 Then the operator $\fC_k:\FE{k}\longrightarrow\FE{k}$ defined by
\begin{equation}\label{eq:fCk}
 \fC_k(p,y)=(L_k p, L_k y)
\end{equation}
\goodbreak\noindent
 is symmetric positive definite (SPD) with respect to $[\cdot,\cdot]_k$ and we have
\begin{equation}\label{eq:EnergyNormEquivalence}
 [\fC_k(p,y),(p,y)]_k\approx \|p\|^2_\HB+\|y\|^2_\HB \qquad  \forall \,(p,y)\in \FE{k},
\end{equation}
 where the hidden constants are independent of $k$ and $\beta$.
\begin{remark}\label{rem:Preconditioner}
  We will use $\fC_k^{-1}$  as a preconditioner in the constructions of the smoothing operators.
  In practice we can take $L_k^{-1}$ to be an approximate solve of the
  $P_1$ finite element discretization of the following boundary value problem:
\begin{equation}\label{eq:BVP}
 -\beta^{\frac{1}{2}}\Delta u+u=\phi \quad\text{in $\O$} \quad\text{and}\quad
 u=0 \quad \text{on $\p\O$}.
\end{equation}
 The  multigrid algorithms in Section~\ref{sec:MG} are $O(n)$ algorithms as long as
 $L_k^{-1}$ is also an $O(n)$ algorithm.
 We refer to \cite{MW:2011:SaddlePoint,ESW:2014:FEFIS}
 for
 the general construction of block diagonal preconditioners for
 saddle point  problems arising
 from the discretization of partial differential equations.
\end{remark}
\begin{lemma}\label{lem:Fundamental}
 We have
\begin{equation}\label{eq:Fundamental}
 [\FO(p,y),(p,y)]_k\approx\|p\|^2_\HB+\|y\|^2_\HB \qquad \forall \, (p,y)\in \FE{k},
\end{equation}
 where the hidden constants are independent of $k$ and $\beta$.
\end{lemma}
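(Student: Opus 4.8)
The plan is to understand the operator $\FO = \fB_k\fC_k^{-1}\fB_k$ by computing its associated quadratic form and relating it to the bilinear form $\cB$ through the defining relations \eqref{eq:fBk} and \eqref{eq:fCk}. Since $\fC_k$ is SPD with respect to $[\cdot,\cdot]_k$, the key observation is that
\begin{equation*}
 [\FO(p,y),(p,y)]_k = [\fC_k^{-1}\fB_k(p,y),\fB_k(p,y)]_k,
\end{equation*}
where I have used the symmetry of $\fB_k$ with respect to $[\cdot,\cdot]_k$ (which follows from the symmetry of $\cB$ evident in \eqref{eq:BDef}). Writing $(w,v)=\fB_k(p,y)\in\FE{k}$, this quantity is exactly $[\fC_k^{-1}(w,v),(w,v)]_k$, a weighted norm of $\fB_k(p,y)$ measured through the SPD preconditioner $\fC_k^{-1}$.

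First I would establish the two-sided bound for $\fC_k^{-1}$ that is dual to \eqref{eq:EnergyNormEquivalence}. Because $\fC_k$ is SPD and satisfies $[\fC_k(q,z),(q,z)]_k\approx\|q\|_\HB^2+\|z\|_\HB^2$, a standard spectral argument (diagonalizing $\fC_k$ in a $[\cdot,\cdot]_k$-orthonormal eigenbasis) gives the reciprocal equivalence
\begin{equation*}
 [\fC_k^{-1}(w,v),(w,v)]_k \approx \sup_{(q,z)\in\FE{k}}
   \frac{[(w,v),(q,z)]_k^2}{\|q\|_\HB^2+\|z\|_\HB^2},
\end{equation*}
with constants independent of $k$ and $\beta$. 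Substituting $(w,v)=\fB_k(p,y)$ and invoking \eqref{eq:fBk} turns the numerator $[(w,v),(q,z)]_k=[\fB_k(p,y),(q,z)]_k=\cB((p,y),(q,z))$, so the right-hand side becomes
\begin{equation*}
 \sup_{(q,z)\in\FE{k}}
   \frac{\cB((p,y),(q,z))^2}{\|q\|_\HB^2+\|z\|_\HB^2}.
\end{equation*}

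At this point the discrete inf-sup and continuity estimate \eqref{eq:DiscreteInfSup} finish the argument: it shows precisely that this supremum is equivalent to $\|p\|_\HB^2+\|y\|_\HB^2$, with both the upper bound (from the Cauchy--Schwarz inequality \eqref{eq:BUpper}) and the lower bound (the inf-sup constant $2^{-1/2}$) having constants independent of $k$ and $\beta$. Chaining these equivalences yields \eqref{eq:Fundamental}. The main obstacle I anticipate is the reciprocal-equivalence step for $\fC_k^{-1}$: one must be careful that passing from the norm equivalence for $\fC_k$ to the dual variational characterization of $\fC_k^{-1}$ preserves $\beta$-independence of the constants. This is where the SPD structure of $\fC_k$ from \eqref{eq:fCk} and the uniform equivalence \eqref{eq:Lk} for $L_k$ are essential, since they guarantee the spectral bounds on $\fC_k$ are uniform, and hence so are those on its inverse.
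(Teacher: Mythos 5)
Your proposal is correct and takes essentially the same route as the paper's proof: both reduce $[\fB_k\fC_k^{-1}\fB_k(p,y),(p,y)]_k$, via the symmetry and SPD structure of $\fC_k$ and the norm equivalence \eqref{eq:EnergyNormEquivalence}, to the dual-norm quantity $\sup_{(q,z)\in\FE{k}}\cB((p,y),(q,z))^2/\bigl(\|q\|_\HB^2+\|z\|_\HB^2\bigr)$, and then conclude with the continuity and inf-sup bounds in \eqref{eq:DiscreteInfSup}. The only cosmetic difference is that the paper phrases the duality step through $(r,x)=\fC_k^{-1}\fB_k(p,y)$ and the identity $[\fC_k(r,x),(r,x)]_k=\sup_{(q,z)}[\fC_k(r,x),(q,z)]_k^2/[\fC_k(q,z),(q,z)]_k$, whereas you invoke the equivalent reciprocal characterization of $\fC_k^{-1}$.
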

\begin{proof}
  Let $(p,y)\in \FE{k}$ be arbitrary and $(r,x)=\fC^{-1}_kB_k(p,y)$. It follows from
  \eqref{eq:DiscreteInfSup}, \eqref{eq:fBk}, \eqref{eq:EnergyNormEquivalence} and duality
  that
\begin{align*}
 [\FO(p,y),(p,y)]_k&=[\fC_k(\fC^{-1}_k\fB_k)(p,y),\fC^{-1}_k\fB_k(p,y)]_k\\
 &=[\mathfrak{C}_k(r,x),(r,x)]_k\\
 &=\sup_{(q,z)\in \FE{k}}\frac{[\fC_k(r,x),(q,z)]^2_k}{[\fC_k(q,z),(q,z)]_k}\\
 &\approx \sup_{(q,z)\in \FE{k}}\ \frac{[\fB_k(p,y),(q,z)]_k^2}{\|q\|^2_\HB
 +\|z\|^2_\HB}\\
 &=\sup_{(q,z)\in V_k\times V_k}
 \frac{\mathcal{B}((p,y),(q,z))^2}{\|q\|^2_{H^1_{\beta}(\Omega)}+\|z\|^2_{H^1_{\beta}(\Omega)}}
 \approx\|p\|^2_\HB+\|y\|^2_\HB.
\end{align*}
\end{proof}
\begin{lemma}\label{lem:SpectralBounds}
  The minimum and maximum eigenvalues of
  $\fB_k\fC_k^{-1}\fB_k$ satisfy the following bounds$:$
\begin{align}
 \lambda_{\min}(\FO)&\geq C_{\min}, \label{eq:Min}\\
 \lambda_{\max}(\FO)&\leq C_{\max}(1+\beta^{\frac{1}{2}}h_k^{-2}),
  \label{eq:Max}
\end{align}
 where the positive constants $C_{\min}$ and $C_{\max}$ are independent of $k$ and $\beta$.
\end{lemma}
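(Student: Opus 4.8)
My plan is to establish the two spectral bounds by combining the operator identity from Lemma~\ref{lem:Fundamental} with the norm-equivalence estimates that are already available. The key observation is that since $\fB_k\fC_k^{-1}\fB_k$ is symmetric with respect to $[\cdot,\cdot]_k$, its extreme eigenvalues are controlled by the Rayleigh quotient
\begin{equation*}
 \frac{[\fB_k\fC_k^{-1}\fB_k(p,y),(p,y)]_k}{[(p,y),(p,y)]_k},
\end{equation*}
so both bounds will follow from comparing the numerator, which by Lemma~\ref{lem:Fundamental} is $\approx \|p\|_\HB^2+\|y\|_\HB^2$, against the denominator $[(p,y),(p,y)]_k=(p,p)_k+(y,y)_k\approx \|p\|_\LT^2+\|y\|_\LT^2$, where the latter equivalence is \eqref{eq:LTEquivalence} together with \eqref{eq:MDIP}.

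For the lower bound \eqref{eq:Min}, I would use that the $\HB$-norm dominates the $\LT$-norm: from the definition \eqref{eq:HB}, $\|v\|_\HB^2=\|v\|_\LT^2+\beta^{1/2}|v|_\HO^2\geq \|v\|_\LT^2$. Hence the numerator is $\gtrsim \|p\|_\LT^2+\|y\|_\LT^2\approx [(p,y),(p,y)]_k$, which gives a Rayleigh quotient bounded below by a constant $C_{\min}$ independent of $k$ and $\beta$. This direction should be routine and requires no inverse inequality.

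For the upper bound \eqref{eq:Max}, the numerator $\approx\|p\|_\HB^2+\|y\|_\HB^2$ now has to be bounded \emph{above} by a multiple of the $\LT$-based denominator, and this is where the mesh-dependence enters. I would estimate the seminorm term using the standard inverse inequality $|v|_\HO\lesssim h_k^{-1}\|v\|_\LT$ valid on $V_k$, giving
\begin{equation*}
 \|v\|_\HB^2=\|v\|_\LT^2+\beta^{\frac12}|v|_\HO^2
  \lesssim \|v\|_\LT^2+\beta^{\frac12}h_k^{-2}\|v\|_\LT^2
  =(1+\beta^{\frac12}h_k^{-2})\|v\|_\LT^2.
\end{equation*}
Summing over the two components and invoking \eqref{eq:LTEquivalence} then bounds the numerator by $C(1+\beta^{1/2}h_k^{-2})[(p,y),(p,y)]_k$, which yields \eqref{eq:Max}.

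The main point to be careful about is the factor $(1+\beta^{1/2}h_k^{-2})$ in the upper bound: it is precisely the inverse-inequality constant applied to the $\beta^{1/2}$-weighted seminorm, and one must check that the hidden constant in the inverse inequality depends only on the shape regularity of $\cT_0$ and not on $\beta$ or $k$, so that the $\beta$- and $k$-dependence is captured entirely by the explicit prefactor. This is the step I expect to be the crux; the lower bound and the Rayleigh-quotient reduction are essentially immediate once Lemma~\ref{lem:Fundamental} and \eqref{eq:LTEquivalence} are in hand.
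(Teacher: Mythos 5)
Your proposal is correct and follows essentially the same route as the paper's own proof: both reduce the eigenvalue bounds to the Rayleigh quotient, use Lemma~\ref{lem:Fundamental} together with \eqref{eq:LTEquivalence} and \eqref{eq:MDIP} to compare $[\FO(p,y),(p,y)]_k$ with $[(p,y),(p,y)]_k$, obtain the lower bound from $\|v\|_\HB\geq\|v\|_\LT$, and obtain the upper bound from the standard inverse estimate with a constant depending only on the shape regularity of $\cT_0$. No gaps; the step you flag as the crux (uniformity of the inverse-inequality constant) is exactly the point the paper also invokes.
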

\begin{proof} We have, from \eqref{eq:LTEquivalence} and \eqref{eq:MDIP},
\begin{equation}\label{eq:Cond1}
  [(p,y),(p,y)]_k\approx \|p\|_\LT^2+\|y\|_\LT^2 \qquad\forall\,(p,y)\in\FE{k},
\end{equation}
 where the hidden constants only depend on the shape regularity of $\cT_0$.  It follows from
 \eqref{eq:HB},
 \eqref{eq:Fundamental} and \eqref{eq:Cond1} that
\begin{equation}\label{eq:Cond2}
  [\FO (p,y),(p,y)]_k\geq C_{\min}[(p,y),(p,y)]_k \qquad\forall\,(p,y)\in\FE{k},
\end{equation}
 which then implies \eqref{eq:Min} by the Rayleigh quotient formula.
\par
  By a standard inverse estimate \cite{Ciarlet:1978:FEM,BScott:2008:FEM}, we have
\begin{equation*}
  \|v\|_\HB^2=\|v\|_\LT^2+\beta^\frac12|v|_\HO^2\leq (1+C\beta^\frac12 h_k^{-2})\|v\|_\LT^2
  \qquad\forall\,v\in V_k,
\end{equation*}
 where the positive constant $C$ depends only on the shape regularity of $\cT_0$.   It then follows from \eqref{eq:HB}, \eqref{eq:Fundamental} and \eqref{eq:Cond1} that
\begin{equation*}
  [\FO(p,y),(p,y)]_k\leq C_{\max}(1+\beta^{\frac{1}{2}}h_k^{-2})[(p,y),(p,y)]_k \qquad\forall\,(p,y)\in\FE{k},
\end{equation*}
 and hence \eqref{eq:Max} holds because of the Rayleigh quotient formula.
\end{proof}
\begin{remark}\label{rem:Conditioning}
 It follows from \eqref{eq:Min} and \eqref{eq:Max} that the operator
 $\FO$ is well-conditioned when $\beta^\frac12 h_k^{-2}=O(1)$.
\end{remark}
%%%%%%%%%%%%%%%%%%%%%%%%%%%%%
\subsection{A $\bm{W}$-Cycle Multigrid Algorithm}\label{subsec:WCycle}
 Let the output of the W-cycle algorithm for \eqref{eq:kthEq} with initial guess $(p_0, y_0)$
 and $m_1$ (resp., $m_2$) pre-smoothing (resp., post-smoothing) steps be denoted by
 $MG_W(k,(f,g), (p_0, y_0),m_1,m_2)$.
\par
 We use a direct solve for $k=0$, i.e., we take  $MG_W(0,(f,g), (p_0, y_0),m_1,m_2)$ to be
 $B_0^{-1}(f,g)$. For $k\geq 1$, we compute  $MG_W(k,(f,g), (p_0, y_0),m_1,m_2)$ in three steps.
\par\medskip\noindent
{\em Pre-Smoothing}\quad
 We compute $(p_1,y_1), \ldots, (p_{m_1},y_{m_1})$
 recursively by
\begin{equation}\label{eq:PreSmoothing}
 (p_j,y_j)=(p_{j-1},y_{j-1})+\lambda_k\fC_k^{-1}\fB_k((f,g)-\fB_k(p_{j-1},y_{j-1}))
\end{equation}
 for $1\leq j\leq m_1$.  The choice of the damping factor $\lambda_k$ will be given below
 in \eqref{eq:Damping1} and \eqref{eq:Damping2}.
\par\medskip\noindent
{\em Coarse Grid Correction}\quad
 Let $(f',g')=I^{k-1}_k((f,g)-B_k(p_{m_1}, y_{m_1}))$ be the transferred
  residual of $(p_{m_1},y_{m_1})$ and let $(p'_1,y'_1),(p'_2,y'_2)\in \FE{k-1}$ be computed  by
\begin{subequations}\label{eq:CoarseGrid}
\begin{align}
 (p'_1,y'_1)&=MG_W(k-1,(f',g'),(0,0),m_1,m_2),\\
 (p'_2,y'_2)&=MG_W(k-1,(f',g'),(p'_1,y'_1),m_1,m_2).
\end{align}
\end{subequations}
 We then take $(p_{m_1+1},y_{m_1+1})$ to be $(p_{m_1},y_{m_1})+I^k_{k-1}(p'_2,y'_2)$.

\par\medskip\noindent
{\em Post-Smoothing}\quad
 We compute $(p_{m_1+2},y_{m_1+2})$,
 $\!\ldots\,$,$\,(p_{m_1+m_2+1},y_{m_1+m_2+1})$
  recursively by
\begin{equation}\label{eq:PostSmoothing}
 (p_j,y_j)=(p_{j-1},y_{j-1})+\lambda_k\fB_k\fC_k^{-1}((f,g)-\fB_k(p_{j-1},y_{j-1}))
\end{equation}
 for  $m_1+2\leq j \leq m_1+m_2+1$.
\par\medskip\noindent
 The final output is $MG_W(k,(f,g), (p_0, y_0),m_1,m_2)=(p_{m_1+m_2+1},y_{m_1+m_2+1})$.
\par\medskip
 To complete the description of the algorithm, we choose the damping factor $\lambda_k$ as follows:
\begin{alignat}{3}
   \lambda_k&=\frac{2}{\lambda_{\min}(\FO)+\lambda_{\max}(\FO)}
     &\qquad&\text{if $\beta^\frac12 h_k^{-2}< 1$},\label{eq:Damping1}\\
 \intertext{and}
   \lambda_k&= [C_\dag({1+\beta^\frac12h_k^{-2}})]^{-1} &\qquad&
   \text{if $\beta^\frac12 h_k^{-2}\geq 1$},
     \label{eq:Damping2}
\end{alignat}
 where $C_\dag$ is greater than or equal to the constant $C_{\max}$ in \eqref{eq:Max}.
\begin{remark}\label{rem:Richardson}
  Note that the post-smoothing step is exactly the Richardson iteration for the equation
 \begin{equation*}
   \FO(p,y)=\fB_k\fC_k^{-1}(f,g),
 \end{equation*}
  which is equivalent to \eqref{eq:kthEq}.
\end{remark}
\begin{remark}\label{rem:Damping}
  In the case where $\beta^\frac12 h_k^{-2}< 1$,
  the choice of $\lambda_k$ is motivated by the well-conditioning of $\FO$
 (cf. Remark~\ref{rem:Conditioning}) and the optimal choice of damping factor
  for the Richardson iteration \cite[p.~114]{Saad:2003:IM}.  In practice the relation
  \eqref{eq:Damping1} only holds approximately, but it affects neither the analysis
  nor the performance of
  the $W$-cycle algorithm.
  In the case where
  $\beta^\frac12 h_k^{-2}\geq 1$, the choice of $\lambda_k$ is motivated by the
  condition $\lambda_{\max}(\lambda_k\FO)\leq 1$ (cf. \eqref{eq:Max}) that
  will ensure the highly oscillatory part of the error is damped out when Richardson
  iteration is used as a smoother for an ill conditioned system
  (cf. Lemma~\ref{lem:PostSmoothing2}).
\end{remark}
%%%%%%%%%%%%%%%%%%%%%%%%%%%%%
\subsection{A $\bm{V}$-Cycle Multigrid Algorithm}\label{subsec:VCycle}
 Let the output of the V-cycle algorithm for \eqref{eq:kthEq} with initial guess
  $(p_0, y_0)$  and $m_1$ (resp., $m_2$) pre-smoothing (resp., post-smoothing)
   steps be denoted by $MG_V(k,(f,g), (p_0, y_0),m_1,m_2)$. The difference between
   the  computations of $MG_V(k,(f,g), (p_0, y_0),m_1,m_2)$ and
   $MG_W(k,(f,g), (p_0, y_0),m_1,m_2)$ is only
   in the coarse grid correction step, where we compute
\begin{equation*}
 (p'_1,y'_1)=MG_V(k-1,(f',g'),(0,0),m_1,m_2)
\end{equation*}
 and take $(p_{m_1+1},y_{m_1+1})$ to be $(p_{m_1},y_{m_1})+I^k_{k-1}(p'_1,y'_1)$.
\begin{remark}\label{rem:VCycle}
   We will focus on the analysis of the $W$-cycle algorithm in this paper.
   But numerical results indicate that the performance of the $V$-cycle algorithm is also robust
  respect to $k$ and $\beta$.
\end{remark}
%%%%%%%%%%%%%%%%%%%%%%%%%%%%%%%%%%%%%%%%%%%%
\section{Smoothing and Approximation Properties}\label{sec:SA}
 We will develop in this section two key ingredients for the convergence analysis of the
 $W$-cycle algorithm, namely, the smoothing and approximation properties.
 They will be expressed in terms of a scale of mesh-dependent norms defined by
\begin{equation}\label{eq:MDNorms}
 \trinorm{(p,y)}_{s,k}=[(\FO)^s(p,y),(p,y)]^\frac12_k \qquad \forall (p,y) \in \FE{k}.
\end{equation}
\par
 Note that
\begin{alignat}{3}
  \trinorm{(p,y)}_{0,k}^2&\approx \|p\|_\LT^2+\|y\|_\LT^2  &\qquad& \forall\,(p,y)\in\FE{k}
  \label{eq:0kNorm}\\
 \intertext{by \eqref{eq:Cond1}, and}
 \trinorm{(p,y)}_{1,k}^2&\approx \|p\|_\HB^2+\|y\|_\HB^2 &\qquad&\forall\,(p,y)\in\FE{k}
 \label{eq:1kNorm}
\end{alignat}
 by \eqref{eq:Fundamental}.
%
%%%%%%%%%%%%%%%%%%%%%%%%%%%%%%%%%%%%%%%%%%%%
\subsection{Post-Smoothing Properties}\label{subsec:PS}
 The error propagation operator for one post-smoothing step defined by
  \eqref{eq:PostSmoothing} is given by
\begin{equation}\label{eq:Rk}
  R_k=Id_k-\lambda_k\FO,
\end{equation}
 where $Id_k$ is the identity operator on $\FE{k}$.
\begin{lemma}\label{lem:PostSmoothing1}
 In the case where $\beta^\frac12 h_k^{-2}< 1$, we have
\begin{equation}\label{eq:PostSmoothing1}
 \trinorm{R_k(p,y)}_{1,k}\leq \gamma \trinorm{(p,y)}_{1,k} \qquad\forall\,(p,y)\in \FE{k},
\end{equation}
 where the constant $\gamma\in (0,1)$ is independent of $k$ and $\beta$.
\end{lemma}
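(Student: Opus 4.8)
The goal is a uniform contraction estimate for the post-smoothing error propagation operator $R_k = Id_k - \lambda_k\FO$ in the $\trinorm{\cdot}_{1,k}$ norm, in the regime $\beta^{\frac12}h_k^{-2}<1$ where $\FO$ is well-conditioned. The plan is to work entirely in the eigenbasis of $\FO$. Since $\FO$ is symmetric positive definite with respect to $[\cdot,\cdot]_k$ (it is $\fB_k$ conjugated through the SPD operator $\fC_k^{-1}$), it admits a $[\cdot,\cdot]_k$-orthonormal eigenbasis $(\phi_i)$ with eigenvalues $\mu_i\in[\lambda_{\min}(\FO),\lambda_{\max}(\FO)]$. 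First I would expand an arbitrary $(p,y)=\sum_i c_i\phi_i$ and observe that $R_k$ is a polynomial in $\FO$, so it is diagonalized by the same basis with eigenvalues $1-\lambda_k\mu_i$.

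\textbf{Reducing the trinorm to a scalar estimate.}
The key point is that the $\trinorm{\cdot}_{1,k}$ norm is also diagonalized by $(\phi_i)$: from \eqref{eq:MDNorms}, $\trinorm{(p,y)}_{1,k}^2=[\FO(p,y),(p,y)]_k=\sum_i\mu_i c_i^2$, and since $R_k$ commutes with $\FO$,
\begin{equation*}
 \trinorm{R_k(p,y)}_{1,k}^2=\sum_i\mu_i(1-\lambda_k\mu_i)^2 c_i^2.
\end{equation*}
Therefore the estimate \eqref{eq:PostSmoothing1} follows as soon as I bound $(1-\lambda_k\mu_i)^2\le\gamma^2$ uniformly over $i$, i.e. $\max_{\mu\in[\lambda_{\min},\lambda_{\max}]}|1-\lambda_k\mu|\le\gamma$. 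This is the standard analysis of the optimally damped Richardson iteration.

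\textbf{The scalar computation.}
With the choice \eqref{eq:Damping1}, $\lambda_k=2/(\lambda_{\min}(\FO)+\lambda_{\max}(\FO))$, the affine function $\mu\mapsto 1-\lambda_k\mu$ takes the values $\pm(\lambda_{\max}-\lambda_{\min})/(\lambda_{\max}+\lambda_{\min})$ at the two endpoints of the spectral interval, so
\begin{equation*}
 \max_{\mu\in[\lambda_{\min},\lambda_{\max}]}|1-\lambda_k\mu|
   =\frac{\lambda_{\max}(\FO)-\lambda_{\min}(\FO)}{\lambda_{\max}(\FO)+\lambda_{\min}(\FO)}=:\gamma.
\end{equation*}
Here is where the regime $\beta^{\frac12}h_k^{-2}<1$ is used: by Lemma~\ref{lem:SpectralBounds}, in this case $\lambda_{\min}(\FO)\ge C_{\min}$ and $\lambda_{\max}(\FO)\le C_{\max}(1+\beta^{\frac12}h_k^{-2})<2C_{\max}$, so the condition number $\lambda_{\max}/\lambda_{\min}$ is bounded by a constant independent of $k$ and $\beta$. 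Consequently $\gamma$ is bounded away from $1$ by a constant with the same independence, establishing \eqref{eq:PostSmoothing1}.

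\textbf{Where the difficulty lies.}
The argument is essentially routine once the spectral reduction is in place; the only substantive ingredient is the uniform condition number bound, which is precisely the content of Lemma~\ref{lem:SpectralBounds} and is already available. The one point requiring a little care is verifying that $\trinorm{\cdot}_{1,k}$ and $R_k$ are simultaneously diagonalizable, which hinges on $\FO$ being self-adjoint with respect to $[\cdot,\cdot]_k$; I would note explicitly that $\fB_k$ is $[\cdot,\cdot]_k$-self-adjoint by \eqref{eq:fBk} and the symmetry of $\cB$, and that $\fC_k^{-1}$ is SPD, so $\FO=\fB_k\fC_k^{-1}\fB_k$ is SPD and the spectral decomposition is legitimate. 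I expect no genuine obstacle; the lemma is a clean application of the optimal-damping estimate specialized to the well-conditioned regime.
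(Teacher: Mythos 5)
Your proof is correct and takes essentially the same approach as the paper: both arguments rest on the standard contraction estimate for the optimally damped Richardson iteration in the energy norm of $\FO$, combined with the uniform spectral bounds $C_{\min}\leq\lambda_{\min}(\FO)$ and $\lambda_{\max}(\FO)<2C_{\max}$ supplied by Lemma~\ref{lem:SpectralBounds} in the regime $\beta^{\frac12}h_k^{-2}<1$. The only difference is presentational: the paper cites the Richardson estimate from the literature, whereas you derive it explicitly from the $[\cdot,\cdot]_k$-orthogonal eigendecomposition of $\FO$ (after verifying its self-adjointness), which is precisely the computation behind the cited result.
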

\begin{proof}  In this case $\lambda_k$ given by \eqref{eq:Damping1} is the optimal damping parameter
 for the Richardson iteration and we have
   $$C_{\min}\leq \lambda_{\min}(\FO)\leq \lambda_{\max}(\FO))< 2C_{\max}$$
 by Lemma~\ref{lem:SpectralBounds}.
 It follows that (cf. \cite[p.~114]{Saad:2003:IM})
\begin{align*}
  \trinorm{R_k(p,y)}_{1,k}&=[\FO R_k(p,y),R_k(p,y)]_k^\frac12\\
  &\leq\Big(\frac{\lambda_{\max}(\FO)-\lambda_{\min}(\FO)}{\lambda_{\max}(\FO)+\lambda_{\min}(\FO)}\Big)
    [(p,y),(p,y)]_k^\frac12\\
    &\leq \Big(\frac{2C_{\max}-C_{\min}}{2C_{\max}+C_{\min}}\Big)\trinorm{(p,y)}_{1,k}.
     \end{align*}
 Therefore \eqref{eq:PostSmoothing1} holds for
 $\gamma=(2C_{\max}-C_{\min})/(2C_{\max}+C_{\min})$.
\end{proof}
\begin{lemma}\label{lem:PostSmoothing2}
  In the case where $\beta^\frac12 h_k^{-2}\geq 1$, we have, for $0\leq s\leq 1$,
\begin{equation*}
 \trinorm{R_k^m(p,y)}_{1,k}\leq C(1+\beta^{\frac{1}{2}}h_k^{-2})^{s/2}m^{-s/2}
  \trinorm{(p,y)}_{1-s,k} \quad \forall (p,y)\in \FE{k},
\end{equation*}
 where the positive constant $C$ is independent of $k$ and $\beta$.
\end{lemma}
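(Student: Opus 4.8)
The plan is to prove the estimate by diagonalizing the post-smoothing error propagation operator $R_k=Id_k-\lambda_k\FO$ in terms of the eigenstructure of $\FO$ and then reducing the bound to a scalar inequality for the function $t\mapsto(1-\lambda_k t)^{2m}t$. Since $\FO$ is SPD with respect to $[\cdot,\cdot]_k$ (it is symmetric and positive definite by Lemma~\ref{lem:Fundamental} and the Rayleigh quotient characterization used in Lemma~\ref{lem:SpectralBounds}), I would fix an $[\cdot,\cdot]_k$-orthonormal basis of eigenvectors $(\phi_i)$ with eigenvalues $\mu_i>0$. Writing $(p,y)=\sum_i c_i\phi_i$, the mesh-dependent norms in \eqref{eq:MDNorms} decouple as $\trinorm{(p,y)}_{s,k}^2=\sum_i\mu_i^s c_i^2$, and $R_k^m$ acts as multiplication by $(1-\lambda_k\mu_i)^m$. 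Hence
\begin{equation*}
 \trinorm{R_k^m(p,y)}_{1,k}^2=\sum_i(1-\lambda_k\mu_i)^{2m}\mu_i c_i^2
 =\sum_i\Big[(1-\lambda_k\mu_i)^{2m}\mu_i^{s}\Big]\mu_i^{1-s}c_i^2,
\end{equation*}
so it suffices to bound $(1-\lambda_k\mu_i)^{2m}\mu_i^{s}$ uniformly in $i$ and then recognize the remaining sum as $\trinorm{(p,y)}_{1-s,k}^2$.

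The core of the argument is therefore the scalar estimate: for each eigenvalue $\mu=\mu_i$ I would show
\begin{equation*}
 (1-\lambda_k\mu)^{2m}\mu^{s}\le C\,(1+\beta^{\frac12}h_k^{-2})^{s}\,m^{-s}.
\end{equation*}
In the regime $\beta^{\frac12}h_k^{-2}\ge1$ the damping factor is $\lambda_k=[C_\dagger(1+\beta^{\frac12}h_k^{-2})]^{-1}$ with $C_\dagger\ge C_{\max}$, so by \eqref{eq:Max} every eigenvalue satisfies $0<\lambda_k\mu\le\lambda_k\lambda_{\max}(\FO)\le1$; thus $t:=\lambda_k\mu\in(0,1]$ and $1-t\in[0,1)$. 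I would rewrite the left side as
\begin{equation*}
 (1-t)^{2m}\mu^{s}=\lambda_k^{-s}\,(1-t)^{2m}t^{s}
 =[C_\dagger(1+\beta^{\frac12}h_k^{-2})]^{s}\,(1-t)^{2m}t^{s},
\end{equation*}
so the whole $\beta^{\frac12}h_k^{-2}$-dependence is now explicit and matches the target, and what remains is the purely numerical claim $\sup_{t\in[0,1]}(1-t)^{2m}t^{s}\le C\,m^{-s}$. This last bound is standard: using $(1-t)^{2m}\le e^{-2mt}$ gives $(1-t)^{2m}t^{s}\le e^{-2mt}t^{s}$, and maximizing $e^{-2mt}t^{s}$ over $t\ge0$ (derivative zero at $t=s/(2m)$) yields the value $(s/2)^{s}e^{-s}m^{-s}\le C m^{-s}$ with a constant independent of $k$, $\beta$, and $m$ (one checks the $s=0$ endpoint separately, where the bound is simply $1$).

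I expect the only genuinely delicate point to be keeping the exponents straight — verifying that the factor $\lambda_k^{-s}$ produces exactly $(1+\beta^{\frac12}h_k^{-2})^{s}$ (not $s/2$) so that, after taking square roots at the end, one recovers the stated $(1+\beta^{\frac12}h_k^{-2})^{s/2}m^{-s/2}$, and confirming that the surviving weight $\mu^{1-s}$ reassembles into $\trinorm{(p,y)}_{1-s,k}^2$ via \eqref{eq:MDNorms}. Everything else is routine: the spectral decomposition is available because $\FO$ is SPD in $[\cdot,\cdot]_k$, the eigenvalue confinement $\lambda_k\mu\in(0,1]$ comes directly from Lemma~\ref{lem:SpectralBounds} together with the choice \eqref{eq:Damping2}, and the scalar maximization is elementary calculus. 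I would close by summing over $i$ and taking square roots.
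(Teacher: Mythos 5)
Your proposal is correct and follows essentially the same route as the paper: both rely on the spectral theorem for the $[\cdot,\cdot]_k$-SPD operator $\FO$, the eigenvalue confinement $\lambda_k\mu\le 1$ guaranteed by \eqref{eq:Damping2} and \eqref{eq:Max}, the extraction of the factor $\lambda_k^{-s}=[C_\dag(1+\beta^{\frac12}h_k^{-2})]^{s}$, and the elementary calculus bound $\max_{0\le x\le 1}(1-x)^{2m}x^{s}\le Cm^{-s}$. The only difference is cosmetic: the paper phrases the argument in operator form, writing $\trinorm{R_k^m(p,y)}_{1,k}^2=\lambda_k^{-s}[(\FO)^{1-s}(\lambda_k\FO)^{s}R_k^m(p,y),R_k^m(p,y)]_k$ and bounding the spectral function directly, whereas you unfold the same computation in an explicit orthonormal eigenbasis.
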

\begin{proof}  In this case $\lambda_k$ is given by \eqref{eq:Damping2} and
 $\lambda_{\max}(\lambda_k\FO)\leq 1$.  It follows from
 \eqref{eq:Damping2},  \eqref{eq:MDNorms},
 \eqref{eq:Rk}, calculus and the spectral theorem that
\begin{align*}
 \trinorm{R^m_k(p,y)}_{1,k}^2&=[\FO R_k^m(p,y), R_k^m(p,y)]_k\\
 &=\lambda_k^{-s}[(\FO)^{1-s}(\lambda_k \FO)^sR_k^m(p,y),R_k^m(p,y)]_k\\
 &\leq C_\dag^{-s}(1+\beta^{\frac{1}{2}}h_k^{-2})^s
 \max_{0\leq x\leq 1}[(1-x)^{2m}x^s][(\FO)^{1-s}(p,y),(p,y)]_k\\
 &\leq C(1+\beta^{\frac{1}{2}}h_k^{-2})^s m^{-s}\trinorm{(p,y)}_{1-s,k}^2.
\end{align*}
\end{proof}
\begin{remark}\label{rem:Rk}
  In the special case where $s=0$, the calculation in the proof of Lemma~\ref{lem:PostSmoothing2}
  shows that
\begin{equation*}
   \trinorm{R_k(p,y)}_{1,k}\leq \trinorm{(p,y)}_{1,k} \qquad\forall\,(p,y)\in \FE{k}.
\end{equation*}
\end{remark}
%%%%%%%%%%%%%%%%%%%%%%%%%%%%%%%%%%%%%%%%%%%%
\subsection{An Approximation Property}\label{subsec:AP}
 We define the Ritz projection operator $P^{k-1}_k:\FE{k}\rightarrow\FE{k-1}$ to be the transpose of the
 coarse-to-fine operator $I^k_{k-1}:\FE{k-1}\rightarrow \FE{k}$ with
  respect to the variational form $\cB(\cdot,\cdot)$.
    Recall that $I_{k-1}^k$ is the natural injection.  Therefore we have,
 for any $(p,y)\in\, \FE{k}$ and $(q,z)\in\,\FE{k-1}$,
\begin{equation}\label{eq:Ritz}
 \cB(P^{k-1}_k(p,y),(q,z))=\cB((p,y),I_{k-1}^k(q,z))=\cB((p,y),(q,z)).
\end{equation}
 It follows that
\begin{equation*}
  P_k^{k-1}I_{k-1}^k=Id_{k-1}
\end{equation*}
 and hence
\begin{equation}\label{eq:Projections}
  (I_{k-1}^kP_k^{k-1})^2=I_{k-1}^kP_k^{k-1} \quad\text{and}\quad
  (Id_k-I_{k-1}^kP_k^{k-1})^2=Id_k-I_{k-1}^kP_k^{k-1}.
\end{equation}
 Moreover we have the following Galerkin orthogonality:
\begin{equation}\label{eq:Galerkin}
  \cB((\AP)(p,y),I_{k-1}^k (q,z))=0 \quad\forall\,(p,y)\in\FE{k},(q,z)\in\FE{k-1}.
\end{equation}
\par
 The effect of the operator $\AP$ is measured by the following
  approximation property.
\begin{lemma}\label{lem:Approximation}
  There exists a positive constant $C$ independent of $k$ and $\beta$ such that
\begin{equation*}
 \trinorm{(\AP)(p,y)}_{0,k}\leq C (1+{\beta}^{\frac{1}{2}}h_k^{-2})^{\frac{1}{2}}
  \beta^{-\frac{1}{2}}h_k^2\trinorm{(p,y)}_{1,k} \quad\forall\,(p,y)\in \FE{k}.
\end{equation*}
%
% for $k\geq 1$.
%
\end{lemma}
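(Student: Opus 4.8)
The plan is to combine an Aubin--Nitsche duality argument with the energy-norm stability of the coarse Ritz projection. Throughout, write $e=(\AP)(p,y)\in\FE{k}$, denote its two components by $e=(e_1,e_2)$, and abbreviate $\|(w,v)\|_*^2=\|w\|_\HB^2+\|v\|_\HB^2$. By the norm equivalences \eqref{eq:0kNorm} and \eqref{eq:1kNorm}, the assertion is equivalent to
\[
 \big(\|e_1\|_\LT^2+\|e_2\|_\LT^2\big)^{\frac12}
 \lesssim (1+\beta^{\frac12}h_k^{-2})^{\frac12}\beta^{-\frac12}h_k^2\,\|(p,y)\|_*.
\]
I would establish this in two steps: first bound the $L_2$-norm of $e$ by $\|e\|_*$ with the stated $\beta$- and $h_k$-dependent factor, and then bound $\|e\|_*$ by $\|(p,y)\|_*$.

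For the duality step, introduce $(\xi,\theta)\in\ES$ solving
\[
 \cB((\xi,\theta),(w,v))=(e_1,w)_\LT+(e_2,v)_\LT\qquad\forall\,(w,v)\in\ES,
\]
so that the choice $(w,v)=e$ gives $\|e_1\|_\LT^2+\|e_2\|_\LT^2=\cB((\xi,\theta),e)$. Since $\cB(\cdot,\cdot)$ is symmetric, the Galerkin orthogonality \eqref{eq:Galerkin} shows that $\cB(I_{k-1}^k\Pi_{k-1}(\xi,\theta),e)=0$, where $\Pi_{k-1}(\xi,\theta)\in\FE{k-1}$ is the coarse nodal interpolant; hence
\[
 \|e_1\|_\LT^2+\|e_2\|_\LT^2
 =\cB\big((\xi,\theta)-I_{k-1}^k\Pi_{k-1}(\xi,\theta),\,e\big).
\]
I would then apply the upper bound \eqref{eq:BUpper}, the interpolation estimate \eqref{eq:PihEst} on $\cT_{k-1}$ (using $h_{k-1}\approx h_k$), and Lemma~\ref{lem:Regularity} applied to $(\xi,\theta)$ to obtain
\[
 \|(\xi,\theta)-I_{k-1}^k\Pi_{k-1}(\xi,\theta)\|_*^2
 \lesssim (h_k^4+\beta^{\frac12}h_k^2)\,\beta^{-1}\big(\|e_1\|_\LT^2+\|e_2\|_\LT^2\big).
\]
Factoring $h_k^4+\beta^{\frac12}h_k^2=h_k^4(1+\beta^{\frac12}h_k^{-2})$ and cancelling one power of $(\|e_1\|_\LT^2+\|e_2\|_\LT^2)^{1/2}$ yields $(\|e_1\|_\LT^2+\|e_2\|_\LT^2)^{1/2}\lesssim(1+\beta^{\frac12}h_k^{-2})^{\frac12}\beta^{-\frac12}h_k^2\,\|e\|_*$.

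For the stability step, I would show $\|e\|_*\lesssim\|(p,y)\|_*$. Because $I_{k-1}^kP_k^{k-1}(p,y)$ is the $\cB$-projection of $(p,y)$ onto the coarse space, the defining relation \eqref{eq:Ritz} gives $\cB(I_{k-1}^kP_k^{k-1}(p,y),\phi)=\cB((p,y),\phi)$ for all $\phi\in I_{k-1}^k\FE{k-1}$. The inf-sup lower bound in \eqref{eq:DiscreteInfSup} at level $k-1$, together with the upper bound \eqref{eq:BUpper}, then gives $\|I_{k-1}^kP_k^{k-1}(p,y)\|_*\le\sqrt2\,\|(p,y)\|_*$, and the triangle inequality yields $\|e\|_*\le(1+\sqrt2)\|(p,y)\|_*$. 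Combining the two steps and reverting to the mesh-dependent norms via \eqref{eq:0kNorm}--\eqref{eq:1kNorm} completes the proof. I expect the duality step to be the main obstacle: one must respect the indefinite, saddle-point structure of $\cB$ (so the coarse projection is Galerkin rather than orthogonal) and, most delicately, weigh the $H^2$-interpolation errors against the $\beta^{1/2}$-weighted regularity bound of Lemma~\ref{lem:Regularity} so that the powers of $\beta$ and $h_k$ assemble into exactly the factor $(1+\beta^{1/2}h_k^{-2})^{1/2}\beta^{-1/2}h_k^2$.
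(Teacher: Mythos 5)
Your proof is correct, and it shares the paper's overall skeleton (an Aubin--Nitsche duality argument driven by the Galerkin orthogonality \eqref{eq:Galerkin} and the regularity estimate of Lemma~\ref{lem:Regularity}), but it differs from the paper's execution in two genuine ways. First, where you approximate the dual solution $(\xi,\theta)$ by its coarse nodal interpolant $I_{k-1}^k\Pi_{k-1}(\xi,\theta)$ and then invoke \eqref{eq:PihEst} and Lemma~\ref{lem:Regularity} directly, the paper instead introduces the \emph{discrete} dual solution $(\xi_{k-1},\theta_{k-1})\in\FE{k-1}$ of the same dual problem and quotes the already-established error estimate \eqref{eq:EnergyError} of Lemma~\ref{lem:ConcreteErrors}; the two devices give the same factor $(1+\beta^{1/2}h_k^{-2})^{1/2}\beta^{-1/2}h_k^2$, yours by re-deriving a piece of that lemma inline. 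Second, and more substantively, the paper's choice buys an extra orthogonality that you do not have: since $(\xi,\theta)-(\xi_{k-1},\theta_{k-1})$ is itself $\cB$-orthogonal to the injected coarse space, the paper can replace the test argument $(\AP)(p,y)$ by $(p,y)$ in the pairing, i.e. $\cB\big((\xi,\theta)-(\xi_{k-1},\theta_{k-1}),(\AP)(p,y)\big)=\cB\big((\xi,\theta)-(\xi_{k-1},\theta_{k-1}),(p,y)\big)$, and so bound everything directly by $\trinorm{(p,y)}_{1,k}$ with no stability step at all. Your interpolant error enjoys no such orthogonality, so you are forced to keep $\|e\|_*$ on the right-hand side and close the argument with the separate bound $\|e\|_*\lesssim\|(p,y)\|_*$; that bound is correct, and your inf-sup argument for it is essentially the paper's own proof of the stability estimate \eqref{eq:RitzStability} in Lemma~\ref{lem:Stability} (which appears only after this lemma, so deriving it yourself is legitimate). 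In short: the paper's double-orthogonality route is shorter given its earlier lemmas, while your route is more self-contained and makes explicit the energy-norm boundedness of the Ritz projector $\AP$, a fact the paper needs anyway for the convergence analysis in Section~\ref{sec:Convergence}.
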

\begin{proof}
 Let $(p,y)\in \FE{k}$ be arbitrary and
\begin{equation}\label{eq:Approximation0}
  (\zeta,\mu)=(\AP)(p,y).
\end{equation}
 In view of \eqref{eq:0kNorm},  it suffices to establish the estimate
\begin{equation}\label{eq:Approximation1}
 \|\zeta\|_\LT+\|\mu\|_\LT
  \lesssim (1+{\beta}^{\frac{1}{2}}h_k^{-2})^{\frac{1}{2}}\beta^{-\frac{1}{2}}h_k^2
  \trinorm{(p,y)}_{1,k}
\end{equation}
 by a duality argument.
\par
 Let  $(\xi,\theta)\in\ES$ be defined by
\begin{equation}\label{eq:Approximation2}
 \cB((\xi,\theta),(q,z))=(\zeta,q)_\LT+(\mu,z)_\LT \qquad \forall\, (q,z)\in \ES,
\end{equation}
 and
  $(\xi_{k-1},\theta_{k-1})\in \FE{k-1}$ be defined by
\begin{equation}\label{eq:Approximation3}
  \cB((\xi_{k-1},\theta_{k-1}),(q,z))=(\zeta,q)_\LT+(\mu,z)_\LT \qquad
  \forall \,(q,z)\in \FE{k-1}.
\end{equation}
 Since $h_{k-1}=2h_k$,  we have, according to Lemma~\ref{lem:ConcreteErrors},
\begin{align}\label{eq:Approximation4}
 \|\xi-\xi_{k-1}\|_\HB+\|\theta-\theta_{k-1}\|_\HB\lesssim
 (1+{\beta}^{\frac{1}{2}}h_k^{-2})^{\frac{1}{2}}\beta^{-\frac{1}{2}}h_k^2(\|\zeta\|_\LT
 +\|\mu\|_\LT).
\end{align}
\par
 Putting \eqref{eq:BUpper}, \eqref{eq:1kNorm}, \eqref{eq:Galerkin},
  \eqref{eq:Approximation0},
 \eqref{eq:Approximation2} and \eqref{eq:Approximation3}
 together, we find
\begin{align*}
  \|\zeta\|_\LT^2+\|\mu\|_\LT^2 &= \cB((\xi,\theta),(\zeta,\mu))\\
 &= \cB((\xi,\theta),(\AP)(p,y))\\
 &= \cB((\xi,\theta)-(\xi_{k-1},\theta_{k-1}),(Id_k-I^k_{k-1}P^{k-1}_k)(p,y))\\
 &= \mathcal{B}((\xi,\theta)-(\xi_{k-1},\theta_{k-1}),(p,y))\\
 &\lesssim
    (\|\xi-\xi_{k-1}\|^2_\HB+\|\theta-\theta_{k-1}\|^2_\HB)^\frac12
    (\|p\|^2_\HB+\|y\|^2_\HB)^\frac12\\
 &\lesssim (1+{\beta}^{\frac{1}{2}}h_k^{-2})^{\frac{1}{2}}\beta^{-\frac{1}{2}}h_k^2(\|\zeta\|_\LT
 +\|\mu\|_\LT)\trinorm{(p,y)}_{1,k},
\end{align*}
 which implies \eqref{eq:Approximation1}.
\end{proof}
\goodbreak
\par
 We will also need the following stability estimates.
\begin{lemma}\label{lem:Stability}
 We have
\begin{alignat}{3}
 \trinorm{I^k_{k-1}(q,z)}_{1,k}&\approx\trinorm{(q,z)}_{1,k-1}
   & \qquad &\forall\, (q,z)\in \FE{k-1},\label{eq:C2FStability}\\
 \trinorm{P^{k-1}_k(p,y)}_{1,k-1}&\lesssim \trinorm{(p,y)}_{1,k}
  &\qquad&\forall\, (p,y)\in \FE{k},\label{eq:RitzStability}
\end{alignat}
 where the hidden constants are independent of $k$ and $\beta$.
\end{lemma}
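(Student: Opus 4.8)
The plan is to derive both estimates from the norm equivalence \eqref{eq:1kNorm}, which converts the mesh-dependent norm $\trinorm{\cdot}_{1,k}$ into the weighted $H^1$ norm $\|\cdot\|_\HB$ (with constants independent of $k$ and $\beta$) at every level.

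For \eqref{eq:C2FStability}, the key observation is that $I^k_{k-1}$ is the natural injection, so for $(q,z)\in\FE{k-1}$ the functions $q$ and $z$ are unchanged when viewed as elements of $\FE{k}$. Since $\|\cdot\|_\HB$ is a norm on the continuous space $\HO$ and makes no reference to the mesh, I would apply \eqref{eq:1kNorm} once at level $k$ and once at level $k-1$ to obtain
\[
 \trinorm{I^k_{k-1}(q,z)}_{1,k}^2 \approx \|q\|_\HB^2 + \|z\|_\HB^2 \approx \trinorm{(q,z)}_{1,k-1}^2,
\]
and taking square roots gives \eqref{eq:C2FStability}. This direction is essentially immediate.

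For \eqref{eq:RitzStability}, I would combine the discrete inf-sup condition \eqref{eq:DiscreteInfSup} (applied at level $k-1$) with the defining relation \eqref{eq:Ritz} of the Ritz projection and the boundedness \eqref{eq:BUpper}. Writing $(P_1,P_2)=P^{k-1}_k(p,y)$, the lower bound in \eqref{eq:DiscreteInfSup} yields
\[
 2^{-\frac12}\big(\|P_1\|_\HB^2+\|P_2\|_\HB^2\big)^\frac12 \leq \sup_{(q,z)\in\FE{k-1}}\frac{\cB(P^{k-1}_k(p,y),(q,z))}{(\|q\|_\HB^2+\|z\|_\HB^2)^\frac12}.
\]
I would then use \eqref{eq:Ritz} to replace the numerator by $\cB((p,y),(q,z))$ and bound it by \eqref{eq:BUpper}, so that the supremum is controlled by $(\|p\|_\HB^2+\|y\|_\HB^2)^\frac12$. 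A final application of \eqref{eq:1kNorm} on both sides delivers \eqref{eq:RitzStability}.

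I do not expect a serious obstacle: once \eqref{eq:1kNorm} is available, the argument is a short bookkeeping exercise, and the constants remain uniform in $k$ and $\beta$ precisely because the hidden constants in \eqref{eq:1kNorm} and the inf-sup bound \eqref{eq:DiscreteInfSup} are. The only point requiring care is orienting the inf-sup inequality correctly in the second part, that is, using the \emph{lower} bound of \eqref{eq:DiscreteInfSup} to control the $\HB$ norm of the projection by the supremum of the Rayleigh-type quotient, rather than the trivial upper bound.
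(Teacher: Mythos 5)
Your proposal is correct and takes essentially the same route as the paper: the injection argument via \eqref{eq:1kNorm} for \eqref{eq:C2FStability}, and the combination of the discrete inf-sup condition \eqref{eq:DiscreteInfSup} at level $k-1$, the Ritz property \eqref{eq:Ritz}, and the boundedness \eqref{eq:BUpper} for \eqref{eq:RitzStability}. The paper merely phrases the second estimate compactly in terms of the mesh-dependent norms $\trinorm{\cdot}_{1,k}$ throughout, whereas you unwrap everything into $\|\cdot\|_\HB$ and convert at the end; the two are the same argument.
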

\begin{proof} The estimate \eqref{eq:C2FStability} follows from
 \eqref{eq:1kNorm} and the fact that $I_{k-1}^k$ is the natural injection.
 The estimate \eqref{eq:RitzStability} then follows from \eqref{eq:DiscreteInfSup},
 \eqref{eq:1kNorm},
 \eqref{eq:Ritz} and \eqref{eq:C2FStability} :
\begin{align*}
  \trinorm{P_k^{k-1}(p,y)}_{1,k-1}&\approx\sup_{(q,z)\in\FE{k-1}}
  \frac{\cB\big(P_k^{k-1}(p,y),(q,z)\big)}{\trinorm{(q,z)}_{1,k-1}}\\
  &=\sup_{(q,z)\in\FE{k-1}}
  \frac{\cB\big((p,y),I_{k-1}^k(q,z)\big)}{\trinorm{(q,z)}_{1,k-1}}
  \lesssim \trinorm{(p,y)}_{1,k}.
\end{align*}
\end{proof}
%
%%%%%%%%%%%%%%%%%%%%%%%%%%%%%%%%%%%%%%%%%%%%
\section{Convergence Analysis of the $W$-Cycle Algorithm}\label{sec:Convergence}
 Let $E_k:\FE{k}\longrightarrow \FE{k}$ be the error propagation operator for the
 $k$-th level $W$-cycle algorithm.  We have the following well-known recursive relation
 (cf. \cite{Hackbusch:1985:MMA,MMB:1987:VMT,BZ:2000:AMG}):
\begin{equation}\label{eq:Recursion}
 E_k=R_k^{m_2}(Id_k-I^k_{k-1}P^{k-1}_k+I^k_{k-1}E^2_{k-1}P^{k-1}_k)S_k^{m_1},
\end{equation}
 where
\begin{equation}\label{eq:Sk}
 S_k=Id_k-\lambda_k\mathfrak{C}^{-1}_k\fB_k^2
\end{equation}
 is the error propagation operator for one pre-smoothing step (cf.  \eqref{eq:PreSmoothing}).
\par
 Note that $S_k$ is the transpose of $R_k$ with respect to the
 variational form $\cB(\cdot,\cdot)$ by \eqref{eq:fBk} and \eqref{eq:Rk}:
\begin{align}\label{eq:Adjoint}
 \cB(S_k(p,y),(q,z))&=[\fB_k(Id_k-\lambda_k\mathfrak{C}^{-1}_k\fB_k^2)(p,y),(q,z)]_k
 \notag\\
   &=[\fB_k(p,y),(Id_k-\lambda_k\FO)(q,z)]_k\\
   &=\cB((p,y),R_k(q,z))&\forall\,(p,y),(q,z)\in \FE{k}.\notag
\end{align}
\par
 The relations \eqref{eq:Ritz} and \eqref{eq:Adjoint} lead to the following useful
 result.
\begin{lemma}\label{lem:Useful}
 We have
\begin{equation*}
  \|R_k^m(\AP)\|\approx \|(\AP)S_k^m\|,
\end{equation*}
 where $\|\cdot\|$ denotes the operator norm with respect to $\trinorm{\cdot}_{1,k}$
 and the hidden constants are independent of $k$ and $\beta$.
\end{lemma}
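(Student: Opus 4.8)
The plan is to exploit the symmetry of $\cB(\cdot,\cdot)$ together with the transpose relations \eqref{eq:Ritz} and \eqref{eq:Adjoint}, reducing the claim to a general principle: an operator on $\FE{k}$ and its $\cB$-transpose have comparable operator norms with respect to $\trinorm{\cdot}_{1,k}$.

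First I would record a dual characterization of the norm $\trinorm{\cdot}_{1,k}$. Combining the equivalence \eqref{eq:1kNorm} with the discrete inf-sup bounds \eqref{eq:DiscreteInfSup} gives
\[
  \trinorm{(p,y)}_{1,k}\approx\sup_{(q,z)\in\FE{k}}
    \frac{\cB((p,y),(q,z))}{\trinorm{(q,z)}_{1,k}},
\]
with hidden constants independent of $k$ and $\beta$; thus $\trinorm{\cdot}_{1,k}$ is, up to uniform constants, its own $\cB$-dual norm.

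Next I would pin down the transpose structure. The form $\cB(\cdot,\cdot)$ is symmetric by \eqref{eq:BDef}, so \eqref{eq:Adjoint} shows that $R_k$ and $S_k$ are mutually $\cB$-transpose, and iterating gives $\cB(R_k^m(p,y),(q,z))=\cB((p,y),S_k^m(q,z))$. Using \eqref{eq:Ritz} and the symmetry of $\cB$ one computes $\cB(I_{k-1}^kP_k^{k-1}(p,y),(q,z))=\cB(P_k^{k-1}(p,y),P_k^{k-1}(q,z))$, which is symmetric in its two arguments; hence $I_{k-1}^kP_k^{k-1}$, and therefore $\AP$, is $\cB$-self-adjoint. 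Combining these facts yields
\[
  \cB\big((R_k^m(\AP))(p,y),(q,z)\big)
   =\cB\big((p,y),((\AP)S_k^m)(q,z)\big),
\]
so that $(\AP)S_k^m$ is precisely the $\cB$-transpose of $R_k^m(\AP)$.

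Finally I would establish and apply the general principle. For any operator $T$ on $\FE{k}$ with $\cB$-transpose $T'$, the dual characterization gives $\|T\|\approx\sup_{u,v}\cB(Tu,v)/(\trinorm{u}_{1,k}\trinorm{v}_{1,k})$; replacing $\cB(Tu,v)$ by $\cB(u,T'v)=\cB(T'v,u)$ and carrying out the supremum over $u$ first returns $\trinorm{T'v}_{1,k}$ by the dual characterization, whence $\|T\|\approx\|T'\|$, with constants depending only on those in \eqref{eq:DiscreteInfSup} and \eqref{eq:1kNorm}. Taking $T=R_k^m(\AP)$ and $T'=(\AP)S_k^m$ gives the assertion. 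I expect the one genuinely substantive step to be the $\cB$-self-adjointness of $\AP$; everything else is a formal duality computation, and the uniformity of the constants in $k$ and $\beta$ is inherited entirely from \eqref{eq:DiscreteInfSup} and \eqref{eq:1kNorm}.
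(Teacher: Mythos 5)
Your proposal is correct and follows essentially the same route as the paper's proof: both rest on the dual characterization of $\trinorm{\cdot}_{1,k}$ obtained from \eqref{eq:DiscreteInfSup} and \eqref{eq:1kNorm}, combined with the transpose relations \eqref{eq:Ritz} and \eqref{eq:Adjoint} to move $R_k^m(\AP)$ across the form $\cB(\cdot,\cdot)$ onto the other argument as $(\AP)S_k^m$. The only difference is presentational: you make the $\cB$-self-adjointness of $\AP$ and the general ``transposes have comparable norms'' principle explicit, whereas the paper performs the same duality computation directly for one inequality and notes that the reverse direction is symmetric.
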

\begin{proof}  It follows from  \eqref{eq:DiscreteInfSup},
 \eqref{eq:1kNorm},  \eqref{eq:Ritz} and \eqref{eq:Adjoint} that
\begin{align*}
 &\trinorm{(\AP)S_k^{m}(p,y)}_{1,k}\notag\\
 &\hspace{40pt}\approx\sup_{(q,z)\in\FE{k}}
 \frac{\cB\big((\AP)S_k^{m}(p,y),(q,z)\big)}{\trinorm{(q,z)}_{1,k}}\\
 &\hspace{40pt}=\sup_{(q,z)\in\FE{k}}
 \frac{\cB\big((p,y),R_k^{m}(\AP)(q,z)\big)}{\trinorm{(q,z)}_{1,k}}\notag\\
 &\hspace{40pt}\lesssim \trinorm{(p,y)}_{1,k} \|R_k^m(\AP)\|&\forall\,(p,y)\in\FE{k},\notag
\end{align*}
 and hence
\begin{equation*}
  \|(\AP)S_k^m\|\lesssim \|R_k^m(\AP)\|.
\end{equation*}
\par
 The  estimate in the other direction is established by a similar argument.
\end{proof}
%
%%%%%%%%%%%%%%%%%%%%%%%%%%%%%%%%
\subsection{Convergence of the Two-Grid Algorithm}\label{subsec:TwoGrid}
 In the two-grid algorithm the coarse grid residual equation is solved exactly.
 By setting $E_{k-1}=0$ in \eqref{eq:Recursion},  we obtain
 the error propagation operator $R_k^{m_2}(\AP)S_k^{m_1}$  for the
 two-grid algorithm with $m_1$ (resp., $m_2$) pre-smoothing (resp., post-smoothing) steps.
\par
 We will separate the convergence analysis into two cases.
%
%%%%%%%%%%%%%%%%%%%
\par\smallskip\noindent{\bf The case where $\beta^\frac12 h_k^{-2}< 1$.}
\quad
 Here we can apply Lemma~\ref{lem:PostSmoothing1} which states that
 $R_k$ is a contraction with respect to $\trinorm{\cdot}_{1,k}$
 and the  contraction number $\gamma$ is independent of $k$ and $\beta$.
\begin{lemma}\label{lem:TG1}
  In the case where $\beta^\frac12 h_k^{-2}< 1$, there exists a positive constant $C_\sharp$
  independent of $k$ and $\beta$ such that
\begin{equation}\label{eq:TG1}
   \|R_k^{m_2}(\AP)S_k^{m_1}\|\leq C_\sharp
   \gamma^{m_1+m_2},
\end{equation}
 where $\|\cdot\|$ is the operator norm with respect to $\trinorm{\cdot}_{1,k}$.
\end{lemma}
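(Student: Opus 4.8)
The plan is to reduce everything to the single fact, supplied by Lemma~\ref{lem:PostSmoothing1}, that in this regime $R_k$ is a $\trinorm{\cdot}_{1,k}$-contraction with factor $\gamma\in(0,1)$, and then to distribute the smoothing powers using submultiplicativity of the operator norm. Writing $\|\cdot\|$ for the operator norm induced by $\trinorm{\cdot}_{1,k}$, the first step is the trivial factorization
\[
  \|R_k^{m_2}(\AP)S_k^{m_1}\|\le \|R_k^{m_2}\|\,\|(\AP)S_k^{m_1}\|,
\]
and Lemma~\ref{lem:PostSmoothing1} together with submultiplicativity gives $\|R_k^{m_2}\|\le\|R_k\|^{m_2}\le\gamma^{m_2}$. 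Thus the post-smoothing factor is already handled, and it remains only to show $\|(\AP)S_k^{m_1}\|\lesssim\gamma^{m_1}$.

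For the middle factor the obstacle is that Lemma~\ref{lem:PostSmoothing1} controls $R_k$ but not the pre-smoother $S_k$ directly. This is exactly where Lemma~\ref{lem:Useful} enters: it yields $\|(\AP)S_k^{m_1}\|\approx\|R_k^{m_1}(\AP)\|$, transferring the smoothing powers onto $R_k$. I would then split once more as $\|R_k^{m_1}(\AP)\|\le\|R_k^{m_1}\|\,\|\AP\|\le\gamma^{m_1}\|\AP\|$, and establish that the coarse-grid correction operator is uniformly bounded, $\|\AP\|\lesssim 1$. The latter follows from the triangle inequality $\|\AP\|\le 1+\|I^k_{k-1}P^{k-1}_k\|$ combined with the stability estimates \eqref{eq:C2FStability} and \eqref{eq:RitzStability} of Lemma~\ref{lem:Stability}, which give $\trinorm{I^k_{k-1}P^{k-1}_k(p,y)}_{1,k}\approx\trinorm{P^{k-1}_k(p,y)}_{1,k-1}\lesssim\trinorm{(p,y)}_{1,k}$.

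Combining the three estimates produces $\|R_k^{m_2}(\AP)S_k^{m_1}\|\lesssim\gamma^{m_2}\cdot\gamma^{m_1}\|\AP\|\lesssim\gamma^{m_1+m_2}$, with $C_\sharp$ absorbing the constant hidden in Lemma~\ref{lem:Useful} and the bound on $\|\AP\|$; crucially these constants are independent of $k$ and $\beta$. I expect no genuine technical difficulty here, since every ingredient is already available; the only point requiring care is the routing of the pre-smoothing powers through $R_k$ via Lemma~\ref{lem:Useful}, which is precisely what lets the uniform contraction of the post-smoother do double duty for both $m_1$ and $m_2$ and thereby yields the decay rate $\gamma^{m_1+m_2}$ that is advertised in the introduction as the advantage of exploiting post-smoothing.
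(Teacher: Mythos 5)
Your proposal is correct: every step is valid, and it relies on the same three ingredients as the paper (the uniform contraction of $R_k$ from Lemma~\ref{lem:PostSmoothing1}, the transposition Lemma~\ref{lem:Useful} to handle the pre-smoother, and the stability estimates \eqref{eq:C2FStability}--\eqref{eq:RitzStability} to bound the coarse-grid correction operator). The one structural difference is in the decomposition. The paper first inserts a second copy of the projection via the idempotency $(\AP)^2=\AP$ from \eqref{eq:Projections} and then splits as
\begin{equation*}
  \|R_k^{m_2}(\AP)S_k^{m_1}\|
  =\|R_k^{m_2}(\AP)(\AP)S_k^{m_1}\|
  \leq \|R_k^{m_2}(\AP)\|\,\|(\AP)S_k^{m_1}\|,
\end{equation*}
pairing each block of smoothing steps with its own copy of $\AP$, whereas you split as $\|R_k^{m_2}\|\,\|(\AP)S_k^{m_1}\|$ and never invoke idempotency, instead peeling $\|\AP\|\lesssim 1$ off as a separate factor inside $\|R_k^{m_1}(\AP)\|$. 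Your version is marginally more elementary, and it works precisely because in the well-conditioned regime $\beta^{\frac12}h_k^{-2}<1$ the smoother $R_k$ is by itself a uniform contraction, so the factor $\gamma^{m_2}$ needs no help from the projection. What the paper's formulation buys is uniformity of the argument across both regimes: in Lemma~\ref{lem:TG2}, where $\beta^{\frac12}h_k^{-2}\geq 1$, the operator $R_k^{m}$ alone has no decay in $m$ (only $\|R_k^m\|\leq 1$), and the decay $m^{-1/2}$ comes exclusively from coupling $R_k^{m}$ with $\AP$ through the smoothing and approximation properties; there the idempotency insertion is indispensable, since each of $R_k^{m_2}$ and $S_k^{m_1}$ must be matched with a copy of the projection. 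So your shortcut is sound here but does not extend to the companion lemma, while the paper's single template handles both.
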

\begin{proof}  We have, from Lemma~\ref{lem:PostSmoothing1}
 and Lemma~\ref{lem:Stability},
\begin{align*}
& \trinorm{R_k^{m}(\AP)(p,y)}_{1,k}\\
 &\hspace{40pt}\leq \gamma^{m}\trinorm{(\AP)(p,y)}_{1,k}
 \lesssim \gamma^{m}\trinorm{(p,y)}_{1,k} \qquad\forall\,(p,y)\in\FE{k},
 \notag
\end{align*}
 and hence
\begin{equation}\label{eq:TG11}
  \| R_k^m(\AP)\|\lesssim \gamma^m.
\end{equation}
\par
 It then follows from Lemma~\ref{lem:Useful} that
\begin{equation}\label{eq:TG12}
  \| (\AP)S_k^m\|\lesssim \gamma^m.
\end{equation}
\par
 Finally we establish \eqref{eq:TG1} by
 combining \eqref{eq:Projections}, \eqref{eq:TG11} and  \eqref{eq:TG12}:
\begin{align*}
  &\|R_k^{m_2}(\AP)S_k^{m_1}\|\\
  &\hspace{30pt}= \|R_k^{m_2}(\AP)(\AP)S_k^{m_1}\|\\
  &\hspace{30pt}\leq \|R_k^{m_2}(\AP)\|\|(\AP)S_k^{m_1}\|
  \lesssim \gamma^{m_1+m_2}.
\end{align*}
\end{proof}
%
%%%%%%%%%%%%%%%%%%%
\par\smallskip\noindent{\bf The case where $\beta^\frac12 h_k^{-2}\geq 1$.}
\quad
 Here we can apply Lemma~\ref{lem:PostSmoothing2}.
\begin{lemma}\label{lem:TG2}
 In the case where $\beta^\frac12 h_k^{-2}\geq  1$, there exists a positive constant
 $C_\flat$ independent of $k$ and $\beta$ such that
\begin{equation}\label{eq:TG2}
   \|R_k^{m_2}(\AP)S_k^{m_1}\|
         \leq C_\flat[\max(1,m_1)\max(1,m_2)]^{-\frac12},
\end{equation}
 where $\|\cdot\|$ is the operator norm with respect to $\trinorm{\cdot}_{1,k}$.
\end{lemma}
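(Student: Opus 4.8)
The plan is to bound the two-grid operator norm $\|R_k^{m_2}(\AP)S_k^{m_1}\|$ by splitting the smoothing exponents symmetrically and exploiting the duality relation from Lemma~\ref{lem:Useful}, which converts post-smoothing estimates on $R_k$ into pre-smoothing estimates on $S_k$. First I would use the idempotency of $\AP$ from \eqref{eq:Projections} to write
\begin{equation*}
 R_k^{m_2}(\AP)S_k^{m_1}=R_k^{m_2}(\AP)\cdot(\AP)S_k^{m_1},
\end{equation*}
so that $\|R_k^{m_2}(\AP)S_k^{m_1}\|\leq \|R_k^{m_2}(\AP)\|\,\|(\AP)S_k^{m_1}\|$, and then by Lemma~\ref{lem:Useful} the second factor satisfies $\|(\AP)S_k^{m_1}\|\lesssim \|R_k^{m_1}(\AP)\|$. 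Thus it suffices to prove the single-factor bound $\|R_k^{m}(\AP)\|\lesssim \max(1,m)^{-1/2}$ for each $m\in\{m_1,m_2\}$ and multiply the two estimates together.

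The heart of the argument is estimating $\|R_k^{m}(\AP)\|$ by chaining the smoothing property (Lemma~\ref{lem:PostSmoothing2}) with the approximation property (Lemma~\ref{lem:Approximation}). Taking $s=1$ in Lemma~\ref{lem:PostSmoothing2} gives, for any $(p,y)\in\FE{k}$,
\begin{equation*}
 \trinorm{R_k^{m}(p,y)}_{1,k}\lesssim (1+\beta^{\frac12}h_k^{-2})^{1/2}m^{-1/2}\trinorm{(p,y)}_{0,k}.
\end{equation*}
Applying this to $(p,y)$ replaced by $(\AP)(p,y)$ and then invoking Lemma~\ref{lem:Approximation} to bound $\trinorm{(\AP)(p,y)}_{0,k}\lesssim (1+\beta^{\frac12}h_k^{-2})^{1/2}\beta^{-1/2}h_k^2\trinorm{(p,y)}_{1,k}$, the two factors of $(1+\beta^{\frac12}h_k^{-2})^{1/2}$ combine with $\beta^{-1/2}h_k^2$. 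In the regime $\beta^{\frac12}h_k^{-2}\geq 1$ we have $1+\beta^{\frac12}h_k^{-2}\approx \beta^{\frac12}h_k^{-2}$, so
\begin{equation*}
 (1+\beta^{\frac12}h_k^{-2})\beta^{-1/2}h_k^2\approx \beta^{\frac12}h_k^{-2}\cdot\beta^{-1/2}h_k^2=1,
\end{equation*}
and the $\beta$- and $h_k$-dependent factors cancel exactly, leaving $\|R_k^{m}(\AP)\|\lesssim m^{-1/2}$. For the degenerate case $m=0$ the bound is trivial once one notes $\|\AP\|\lesssim 1$ via \eqref{eq:1kNorm} and Lemma~\ref{lem:Stability}, which is why the statement uses $\max(1,m)$ rather than $m$.

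Combining the single-factor bound for $m_1$ and $m_2$ then yields
\begin{equation*}
 \|R_k^{m_2}(\AP)S_k^{m_1}\|\lesssim \max(1,m_2)^{-1/2}\max(1,m_1)^{-1/2}=[\max(1,m_1)\max(1,m_2)]^{-1/2},
\end{equation*}
which is precisely \eqref{eq:TG2}. I expect the main obstacle to be the careful bookkeeping of the $(1+\beta^{\frac12}h_k^{-2})$ powers and verifying that they cancel exactly against $\beta^{-1/2}h_k^2$ in the regime $\beta^{\frac12}h_k^{-2}\geq 1$; this cancellation is the whole point of the change of variables \eqref{eq:CV} and the weighted norm \eqref{eq:HB}, and it is what makes the estimate uniform in $\beta$. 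A secondary technical point is handling the $m=0$ endpoint cleanly so that the duality step in Lemma~\ref{lem:Useful} and the factorization via idempotency remain valid when no smoothing is performed on one side.
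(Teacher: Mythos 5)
Your proposal is correct and follows essentially the same route as the paper's proof: the idempotency factorization via \eqref{eq:Projections}, the single-factor bound $\|R_k^m(\AP)\|\lesssim m^{-1/2}$ obtained by chaining Lemma~\ref{lem:PostSmoothing2} (with $s=1$) with Lemma~\ref{lem:Approximation} and using the cancellation $(1+\beta^{1/2}h_k^{-2})\beta^{-1/2}h_k^2\leq 2$ in the regime $\beta^{1/2}h_k^{-2}\geq 1$, and the transfer to the pre-smoothing factor via Lemma~\ref{lem:Useful}. The paper treats the endpoint cases $m_1=0$ or $m_2=0$ by citing the stability estimates \eqref{eq:C2FStability} and \eqref{eq:RitzStability}, which amounts to the same observation as your remark that $\|\AP\|\lesssim 1$.
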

\begin{proof}  Let $m$ be any positive integer.
 We have, from Lemma~\ref{lem:PostSmoothing2}
 and Lemma~\ref{lem:Approximation},
\begin{align*}
  &\trinorm{R_k^m(\AP)(p,y)}_{1,k} \\
  &\hspace{40pt}\lesssim (1+\beta^\frac12 h_k^{-2})^\frac12 m^{-\frac12}
   \trinorm{(\AP)(p,y)}_{0,k}\\
   &\hspace{40pt}\lesssim (1+\beta^\frac12 h_k^{-2})^\frac12 m^{-\frac12}
   (1+{\beta}^{\frac{1}{2}}h_k^{-2})^{\frac{1}{2}}
  \beta^{-\frac{1}{2}}h_k^2\trinorm{(p,y)}_{1,k} \\
  &\hspace{40pt}= m^{-\frac12}(\beta^{-\frac12}h_k^2+1)
  \trinorm{(p,y)}_{1,k}\\
  &\hspace{40pt}\leq 2m^{-\frac12}
  \trinorm{(p,y)}_{1,k}  &\forall\,(p,y)\in\FE{k},
\end{align*}
 and hence
\begin{equation}\label{eq:TG21}
 \|R_k^m(\AP)\|\lesssim m^{-\frac12}.
\end{equation}
\par
 It then follows from Lemma~\ref{lem:Useful} that
\begin{equation}\label{eq:TG22}
 \|(\AP)S_k^m\|\lesssim m^{-\frac12}.
\end{equation}
 Combining \eqref{eq:Projections}, \eqref{eq:TG21} and \eqref{eq:TG22},
 we obtain for $m_1,m_2\geq1$,
\begin{align*}
 &\|R_k^{m_2}(\AP)S_k^{m_1}\|=\|R_k^{m_2}(\AP)(\AP)S_k^{m_1}\|\\
    &\hspace{70pt}\leq \|R_k^{m_2}(\AP)\|\|(\AP)S_k^{m_1}\|\\
    &\hspace{70pt}\lesssim (m_1m_2)^{-\frac12}.
\end{align*}
\par
 The cases where $m_1=0$ or $m_2=0$ follow directly from \eqref{eq:C2FStability},
 \eqref{eq:RitzStability}, \eqref{eq:TG21} and \eqref{eq:TG22}.
\end{proof}
%
%%%%%%%%%%%%%%%%%%%%%
\subsection{Convergence of the $\bm W$-Cycle Algorithm}\label{subsec:WCycleConvergence}
 We will derive error estimates for the $W$-cycle algorithm through \eqref{eq:Recursion} and
 the results for the two-grid algorithm in Section~\ref{subsec:TwoGrid}.  For simplicity
 we will focus on the symmetric $W$-cycle algorithm where $m_1=m_2=m\geq 1$.
\par
 According to Remark~\ref{rem:Rk}, we have
\begin{equation}\label{eq:RkTrivial}
  \|R_k^m\|\leq 1  \qquad\text{for}\;m\geq 1.
\end{equation}
 It follows from \eqref{eq:DiscreteInfSup}, \eqref{eq:1kNorm}, \eqref{eq:Adjoint} and
 \eqref{eq:RkTrivial} that
\begin{align*}
   \trinorm{S_k^m(p,y)}_{1,k}&\approx
   \sup_{(q,z)\in\FE{k}}\frac{\cB\big(S_k^m(p,y),(q,z)\big)}
   {\trinorm{(q,z)}_{1,k}}\\
   &= \sup_{(q,z)\in\FE{k}}\frac{\cB\big((p,y),R_k^m(q,z)\big)}
   {\trinorm{(q,z)}_{1,k}}
   \lesssim \trinorm{(p,y)}_{1,k} \qquad \forall\,(p,y)\in\FE{k},
\end{align*}
 which means there exists a positive constant $C_S$ independent of $k$ and $\beta$ such that
\begin{equation}\label{eq:SkTrivial}
  \|S_k^m\|\leq C_S \qquad\text{for}\;m\geq 1.
\end{equation}
\par
 Putting Lemma~\ref{lem:Stability}, \eqref{eq:Recursion}, \eqref{eq:RkTrivial} and
 \eqref{eq:SkTrivial} together, we obtain the recursive estimate
\begin{equation}\label{eq:RecursiveEstimate}
 \|E_k\|\leq \|R_k^m(\AP)S_k^m\|+C_*\|E_{k-1}\|^2
 \qquad\text{for}\;k\geq 1,
\end{equation}
 where the positive constant $C_*$ is independent of $k$ and $\beta$.
 The behavior of $\|E_k\|$ is therefore determined by \eqref{eq:RecursiveEstimate},
 the behavior of $\|R_k^m(\AP)S_k^m\|$, and
 the initial condition
\begin{equation}\label{eq:IC}
 \|E_0\|=0.
\end{equation}
\par
 Specifically, for $\beta^\frac12 h_k^{-2}< 1$, we have
\begin{equation}\label{eq:CoarseLevels}
 \|E_k\|\leq C_\sharp \gamma^{2m}+C_*\|E_{k-1}\|^2
\end{equation}
 by Lemma~\ref{lem:TG1}, and for $\beta^\frac12 h_k^{-2}\geq 1$, we have
\begin{equation}\label{eq:FineLevels}
  \|E_k\|\leq C_\flat m^{-1}+C_*\|E_{k-1}\|^2
\end{equation}
 by Lemma~\ref{lem:TG2}.
\par
 The following result is useful for the analysis of \eqref{eq:IC}--\eqref{eq:FineLevels}.
\begin{lemma}\label{lem:AbstractBdd}
  Let $\alpha_k$ $(k=0,1,2,\ldots)$ be a sequence of nonnegative numbers such that
\begin{equation}\label{eq:AbstractRE}
  \alpha_k\leq 1+\delta \alpha_{k-1}^2 \qquad\text{for}\quad k\geq1,
\end{equation}
 where the positive constant $\delta$ satisfies
\begin{equation}\label{eq:delta}
  \delta\leq \frac{1}{4(1+\alpha_0)}.
\end{equation}
 Then we have
\begin{equation}\label{eq:AbstractBdd}
  \alpha_k\leq 2+ 4^{1-2^k}\alpha_0 \qquad\text{for}\quad k\geq 0.
\end{equation}
\end{lemma}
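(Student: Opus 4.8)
The plan is to prove \eqref{eq:AbstractBdd} by induction on $k$, after rewriting the target bound as $\alpha_k \le 2 + c_k$ with $c_k := 4^{1-2^k}\alpha_0$. The base case $k=0$ is immediate, since $c_0 = 4^{0}\alpha_0 = \alpha_0$ and the asserted inequality $\alpha_0 \le 2 + \alpha_0$ holds trivially. For the inductive step I would assume $\alpha_{k-1}\le 2 + c_{k-1}$ and feed this into the recursion \eqref{eq:AbstractRE}. Because $t\mapsto 1 + \delta t^2$ is nondecreasing on $[0,\infty)$ and $0\le \alpha_{k-1}\le 2+c_{k-1}$, the hypothesis gives
\begin{equation*}
 \alpha_k \le 1 + \delta\alpha_{k-1}^2 \le 1 + \delta(2+c_{k-1})^2,
\end{equation*}
and then the constraint \eqref{eq:delta} lets me replace $\delta$ by $1/[4(1+\alpha_0)]$ to obtain
\begin{equation*}
 \alpha_k \le 1 + \frac{(2+c_{k-1})^2}{4(1+\alpha_0)}.
\end{equation*}

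It then remains to verify the purely algebraic inequality $1 + (2+c_{k-1})^2/[4(1+\alpha_0)] \le 2 + c_k$, which after clearing the denominator becomes $4c_{k-1} + c_{k-1}^2 \le 4\alpha_0 + 4(1+\alpha_0)c_k$. The structural fact I would exploit is the doubling built into the exponents: writing $\rho = 4^{1-2^{k-1}}$ one has $c_{k-1} = \rho\alpha_0$ and $c_k = \tfrac14\rho^2\alpha_0$, because $4^{1-2^k} = \tfrac14\big(4^{1-2^{k-1}}\big)^2$. Substituting these and cancelling the common factor $\alpha_0$, the desired inequality collapses to $4\rho \le 4 + \rho^2$, i.e. to $(\rho-2)^2\ge 0$, which is automatic.

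The main obstacle, and really the only delicate point, is organizing the algebra so that it reduces to this perfect square rather than to a weaker estimate. A face-value bound on the cross term $4\delta c_{k-1}$ is too lossy and only yields $\alpha_k \le 2 + c_{k-1}$, which fails to close the induction since $c_k$ is genuinely smaller than $c_{k-1}$; the doubly-exponential gain in \eqref{eq:AbstractBdd} comes precisely from the quadratic self-improvement $c_k = c_{k-1}^2/(4\alpha_0)$, so one must keep the square term $c_{k-1}^2$ together with the negative contribution $-4\alpha_0$ instead of discarding either. The identification $c_k = 4^{1-2^k}\alpha_0$ is exactly what produces the $2^k$ in the exponent. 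Finally, the degenerate case $\alpha_0 = 0$ needs no separate treatment: the reduced inequality $\alpha_0(4\rho)\le \alpha_0(4+\rho^2)$ reads $0\le 0$, so the induction goes through uniformly without ever dividing by $\alpha_0$.
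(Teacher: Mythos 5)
Your proof is correct and follows essentially the same route as the paper: induction on $k$, feeding the inductive hypothesis into the recursion \eqref{eq:AbstractRE}, invoking \eqref{eq:delta}, and exploiting the doubling identity $4^{1-2^k}=\tfrac14\big(4^{1-2^{k-1}}\big)^2$. The only difference is in how the final algebra is closed: the paper bounds the expanded terms separately (using $4^{1-2^{k-1}}\le 1$ and $\delta\alpha_0\le\tfrac14$), while you clear denominators and reduce everything to the perfect square $(\rho-2)^2\ge 0$ --- a slightly tighter but equivalent finish.
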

\begin{proof}  The bound \eqref{eq:AbstractBdd} holds trivially for $k=0$.
  Suppose it holds for $k\geq0$.
  We have, by \eqref{eq:AbstractRE} and \eqref{eq:delta},
\begin{align*}
   \alpha_{k+1}\leq 1+\delta \alpha_{k}^2
         &\leq 1+\delta(2+4^{1-2^k}\alpha_0)^2\\
         &=1+\delta(4+4^{1-2^k}4\alpha_0)+(\delta\alpha_0) 4^{2-2^{k+1}}\alpha_0\\
         &\leq 1+\delta(4+4\alpha_0)+\Big(\frac14\Big)4^{2-2^{k+1}}\alpha_0
         \leq 2+4^{1-2^{k+1}}\alpha_0.
\end{align*}
 Therefore the bound \eqref{eq:AbstractBdd} holds for $k\geq0$ by mathematical induction.
\end{proof}
\begin{theorem}\label{thm:WCycle}
  Let $k_*$ be the largest positive integer such that $\beta^\frac12 h_{k_*}^{-2}< 1$.
  There exists a positive integer $m_*$ independent of $k$ such that
  $m\geq m_*$ implies
\begin{alignat}{3}
  \|E_k\|&\leq 2C_\sharp \gamma^{2m}   &\qquad&\forall\,1\leq k\leq k_*,  \label{eq:CoarseEst}\\
  \|E_k\|&\leq 2C_\flat m^{-1}+ 4^{1-2^{k-k_*}}(2C_\sharp \gamma^{2m})
     &\qquad&\forall\,k\geq k_*+1,\label{eq:FineEst}
\end{alignat}
 where $\|\cdot\|$ is the operator norm with respect to $\trinorm{\cdot}_{1,k}$.
\end{theorem}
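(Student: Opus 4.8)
The plan is to split the analysis into the coarse levels $1\le k\le k_*$ and the fine levels $k\ge k_*+1$, establish \eqref{eq:CoarseEst} first by a direct induction, feed its output into the fine-level analysis as an initial value, and defer the choice of $m_*$ to the very end so that a small number of smallness conditions can all be arranged simultaneously. Because $\beta^{1/2}h_k^{-2}$ increases with $k$, every level $k\le k_*$ is a coarse level governed by \eqref{eq:CoarseLevels}, while every level $k\ge k_*+1$ is a fine level governed by \eqref{eq:FineLevels}, so this dichotomy matches the two regimes exactly.

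First I would prove \eqref{eq:CoarseEst} by induction on $k$, with base case $\|E_0\|=0$ from \eqref{eq:IC}. Assuming $\|E_{k-1}\|\le 2C_\sharp\gamma^{2m}$ and inserting this into \eqref{eq:CoarseLevels} gives $\|E_k\|\le C_\sharp\gamma^{2m}+4C_*C_\sharp^2\gamma^{4m}$, and this is bounded by $2C_\sharp\gamma^{2m}$ precisely when $4C_*C_\sharp\gamma^{2m}\le 1$. Since $\gamma\in(0,1)$, this inequality holds for all sufficiently large $m$, which is the first constraint defining $m_*$, and the induction then runs through $k=k_*$.

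Next I would rescale \eqref{eq:FineLevels} into the exact form required by Lemma~\ref{lem:AbstractBdd}. Setting $\alpha_j=m\|E_{k_*+j}\|/C_\flat$ for $j\ge 0$, the estimate \eqref{eq:FineLevels} at level $k=k_*+j$ becomes $\alpha_j\le 1+\delta\alpha_{j-1}^2$ with $\delta=C_*C_\flat/m$, and the initial value is $\alpha_0=m\|E_{k_*}\|/C_\flat$, which by the already-established \eqref{eq:CoarseEst} obeys $\alpha_0\le 2C_\sharp m\gamma^{2m}/C_\flat$. Once the hypothesis of Lemma~\ref{lem:AbstractBdd} is checked, its conclusion $\alpha_j\le 2+4^{1-2^j}\alpha_0$ translates, upon multiplying through by $C_\flat/m$, into $\|E_{k_*+j}\|\le 2C_\flat m^{-1}+4^{1-2^j}\|E_{k_*}\|\le 2C_\flat m^{-1}+4^{1-2^j}(2C_\sharp\gamma^{2m})$, which is \eqref{eq:FineEst} with $j=k-k_*$. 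The degenerate case in which no positive integer satisfies $\beta^{1/2}h_k^{-2}<1$ is handled identically by reading $\|E_{k_*}\|$ as $\|E_0\|=0$, so $\alpha_0=0$.

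The main obstacle is verifying the smallness hypothesis \eqref{eq:delta}, namely $\delta\le 1/[4(1+\alpha_0)]$, uniformly in $k$ and $\beta$; the subtlety is that $\alpha_0$ is not a fixed quantity but depends on $m$, $\beta$ and the mesh through $\|E_{k_*}\|$, so it cannot be treated as an absolute constant. The resolution is to control $\alpha_0$ by \eqref{eq:CoarseEst}: substituting the bounds for $\delta$ and $\alpha_0$, the requirement $\delta(1+\alpha_0)\le 1/4$ reduces to $C_*C_\flat m^{-1}+2C_*C_\sharp\gamma^{2m}\le 1/4$, in which every constant is independent of $k$ and $\beta$ and both terms tend to $0$ as $m\to\infty$. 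Choosing $m_*$ large enough to enforce this inequality together with the coarse-level constraint $4C_*C_\sharp\gamma^{2m}\le 1$ yields a single threshold $m_*$, independent of $k$ and $\beta$, for which both \eqref{eq:CoarseEst} and \eqref{eq:FineEst} hold whenever $m\ge m_*$.
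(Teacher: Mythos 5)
Your proposal is correct and follows essentially the same route as the paper: the coarse levels $1\le k\le k_*$ and fine levels $k\ge k_*+1$ are treated via the recursive bounds \eqref{eq:CoarseLevels} and \eqref{eq:FineLevels}, the fine-level analysis is reduced to Lemma~\ref{lem:AbstractBdd} by the rescaling $\alpha_j=m\|E_{k_*+j}\|/C_\flat$ with $\delta=C_*C_\flat m^{-1}$, and the final threshold $m_*$ is fixed by the same combined condition $C_*C_\flat m^{-1}+2C_*C_\sharp\gamma^{2m}\le \frac14$. The only (immaterial) difference is that you handle the coarse levels by a direct induction, whereas the paper applies Lemma~\ref{lem:AbstractBdd} there too with $\alpha_0=0$; both yield the identical constraint $C_*C_\sharp\gamma^{2m}\le\frac14$.
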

\begin{proof}  For $1\leq k\leq k_*$, we take
   $\alpha_k=\|E_k\|/(C_\sharp \gamma^{2m})$
 and observe that
\begin{equation*}
  \alpha_k\leq 1+(C_*C_\sharp\gamma^{2m})\alpha_{k-1}^2
\end{equation*}
 by \eqref{eq:CoarseLevels}.  It then follows from \eqref{eq:IC} and Lemma~\ref{lem:AbstractBdd}
 that $\alpha_k\leq 2$, or equivalently
\begin{equation*}
  \|E_k\|\leq 2 C_\sharp\gamma^{2m},
\end{equation*}
 provided that
\begin{equation}\label{eq:CoarseCondition}
  C_*C_\sharp\gamma^{2m}\leq \frac14.
\end{equation}
\par
 We now define
    $\mu_{k}=\|E_{k_*+k}\|/(C_\flat m^{-1})$
 and observe that
\begin{equation*}
  \mu_k\leq 1+(C_*C_\flat m^{-1})\mu_{k-1}^2
  \qquad\text{for}\quad k\geq 1
\end{equation*}
 by \eqref{eq:FineLevels}.  It then follows from Lemma~\ref{lem:AbstractBdd}
 that
\begin{equation*}
  \mu_k\leq 2+4^{1-2^k}\mu_0 \qquad\text{for}\quad k\geq 1,
\end{equation*}
 or equivalently
\begin{equation*}
  \|E_k\|\leq 2C_\flat m^{-1}+
     4^{1-2^{k-k_*}}\|E_{k_*}\| \qquad\text{for}\quad k\geq k_*+1,
\end{equation*}
 provided that
\begin{equation*}
  C_*C_\flat m^{-1}\leq \frac{1}{4(1+\|E_{k_*}\|/(C_\flat m^{-1}))},
\end{equation*}
 or equivalently
\begin{equation}\label{eq:FineCondition}
  C_*C_\flat m^{-1}+C_*\|E_{k_*}\|\leq \frac14.
\end{equation}
\par
 Finally we observe that if we choose $m_*$ so that
   $$C_*C_\flat m_*^{-1}+2C_*C_\sharp \gamma^{2m_*}\leq \frac 14,$$
 then \eqref{eq:CoarseCondition} and \eqref{eq:FineCondition} are satisfied for
 $m\geq m_*$.
\end{proof}
\begin{remark}\label{rem:Transition}
 According to Theorem~\ref{thm:WCycle}, the  $k$-th level symmetric
 $W$-cycle algorithm is a contraction if the number of smoothing steps is sufficiently
 large and the contraction number is bounded away from $1$ uniformly in $k$ and $\beta$.
  Moreover,  for the coarser levels where $\beta^\frac12 h_k^{-2}<1$,
 the contraction number of the symmetric $W$-cycle algorithm will decrease exponentially
  with respect to the number $m$ of smoothing steps.  After a few transition levels
  the dominant term on the right-hand side of \eqref{eq:FineEst} becomes
   $2C_\flat m^{-1}$ and
   the contraction number will decrease at the rate of
  $m^{-1}$ for the finer levels where $\beta^\frac12 h_k^{-2}\geq 1$.
\end{remark}
\begin{remark}\label{rem:Nonsymmetric}
  For the nonsymmetric $W$-cycle algorithm with $m_1$ (resp., $m_2$) pre-smoothing
  (reps., post-smoothing) steps, the estimates \eqref{eq:CoarseEst} and \eqref{eq:FineEst}
  are replaced by
\begin{alignat*}{3}
  \|E_k\|&\leq 2C_\sharp \gamma^{m_1+m_2}   &\qquad&\forall\,1\leq k\leq k_*,  \\\
  \|E_k\|&\leq 2C_\flat [\max(1,m_1)\max(1,m_2)]^{-\frac12}+ 4^{1-2^{k-k_*}}(2C_\sharp \gamma^{m_1+m_2})
     &\qquad&\forall\,k\geq k_*+1.
\end{alignat*}
\end{remark}
%%%%%%%%%%%%%%%%%%%%%%%%%%%%%%%%%%%%%%%%%%%%%%%%%%%%
\section{Numerical Results}\label{sec:Numerics}
 In this section we report numerical results of the symmetric $W$-cycle and $V$-cycle algorithms
  in two
 and three dimensional convex domains for $\beta=10^{-2}, 10^{-4}$ and $10^{-6}$, where
 the preconditioner $\fC_k^{-1}$  is based on a $V(4,4)$ multigrid solve   for  \eqref{eq:BVP}.
 \par
 The norm $\|E_k\|$ of the error propagation operator  is determined by a power iteration, and we employed
  the MATLAB/C++ toolbox FELICITY \cite{Walker:2018:FELICITY}
 in our computation.
\begin{example} {\bf(Unit Square)}\label{example:Square}
\par\smallskip\noindent
 The domain $\O$ for this example is the unit square $(0,1)\times(0,1)$.  The initial triangulation
 $\cT_0$ is depicted in Figure~\ref{figure:2D} and the triangulations
  $\cT_1,\ldots,\cT_7$ are generated by uniform subdivisions.
 The norms $\|E_k\|$ for the error propagation operators of the
 $k$-th level  symmetric $W$-cycle algorithm
 with $\beta=10^{-2}$  (resp., $\beta=10^{-4}$ and $\beta=10^{-6}$)
 are presented in Table~\ref{table:WCycle11}  (resp., Table~\ref{table:WCycle12} and Table~\ref{table:WCycle13}),
 where the number $m$ of pre-smoothing and post-smoothing steps increases from $2^0$ to $2^8$.
 The times for one iteration of the $W$-cycle algorithm at level 7
 (where the number of degrees of freedom (DOF) is roughly $6\times 10^4$) are also included.
\par
 We observe that the symmetric $W$-cycle algorithm is a contraction with $m=1$ for all three choices of
 $\beta$, and the behavior of the contraction numbers as $k$ and $m$ vary
  agree with Remark~\ref{rem:Transition}.  The robustness of $\|E_k\|$ with respect to $\beta$ and $k$ is
  also clearly observed.  The  times for one iteration of the $W$-cycle at level 7
   are proportional to
  the number of smoothing steps, which confirms that this is an $O(n)$ algorithm.
\end{example}
\begin{table}[htbp]
\begin{tabular}{|c|c|c|c|c|c|c|c||c|}\hline
\backslashbox{$m$}{\lower 2pt\hbox{$k$}}&1&2&3&4&5&6&7&Time \\
\hline
\rule{0pt}{2.5ex}$2^0$&3.00e-01&6.88e-01&6.70e-01&6.31e-01&6.19e-01&6.16e-01&6.15e-01&2.59e-01\\
\hline
\rule{0pt}{2.5ex}$2^1$&8.90e-02&4.83e-01&4.71e-01&4.45e-01&4.35e-01&4.32e-01&4.31e-01&5.47e-01\\
\hline
\rule{0pt}{2.5ex}$2^2$&7.93e-03&2.58e-01&2.67e-01&2.67e-01&2.63e-01&2.61e-01&2.61e-01&1.21e\splus00\\
\hline
\rule{0pt}{2.5ex}$2^3$&6.28e-05&1.09e-01&1.25e-01&1.20e-01&1.15e-01&1.14e-01&1.14e-01&1.82e\splus00\\
\hline
\rule{0pt}{2.5ex}$2^4$&3.94e-09&5.19e-02&4.55e-02&4.94e-02&5.15e-02&5.17e-02&5.18e-02&3.68e\splus00\\
\hline
\rule{0pt}{2.5ex}$2^5$&1.41e-16&2.01e-02&1.57e-02&2.45e-02&2.48e-02&2.55e-02&2.56e-02&7.07e\splus00\\
\hline
\rule{0pt}{2.5ex}$2^6$&2.83e-17&3.11e-03&7.97e-03&9.39e-03&1.22e-02&1.30e-02&1.31e-02&1.42e\splus01\\
\hline
\rule{0pt}{2.5ex}$2^7$&-&7.45e-05&2.09e-03&3.53e-03&5.98e-03&6.22e-03&6.43e-03&2.83e\splus01\\
\hline
\rule{0pt}{2.5ex}$2^8$&-&4.27e-08&1.44e-04&1.67e-03&2.23e-03&3.05e-03&3.26e-03&5.51e\splus01\\
\hline
\end{tabular}
\par\bigskip
 \caption{The norm for the error propagation operator of the
 symmetric $W$-cycle algorithm
 with $\beta=10^{-2}$,
  together with the time (in seconds) for one iteration
 of the $W$-cycle algorithm at level 7 ($\O=$ unit square)}
\label{table:WCycle11}
\end{table}
\begin{table}[htbp]
\begin{tabular}{|c|c|c|c|c|c|c|c||c|}\hline
\backslashbox{$m$}{\lower 2pt\hbox{$k$}}&1&2&3&4&5&6&7&Time \\
\hline
\rule{0pt}{2.5ex}$2^0$&7.01e-02&2.20e-01&6.50e-01&7.10e-01&6.56e-01&6.24e-01&6.17e-01&2.66e-01\\
\hline
\rule{0pt}{2.5ex}$2^1$&6.09e-03&6.31e-02&6.62e-01&5.44e-01&4.68e-01&4.40e-01&4.33e-01&5.67e-01\\
\hline
\rule{0pt}{2.5ex}$2^2$&3.71e-05&3.99e-03&5.62e-01&3.57e-01&2.88e-01&2.67e-01&2.62e-01&1.13e\splus00\\
\hline
\rule{0pt}{2.5ex}$2^3$&1.38e-09&1.59e-05&4.08e-01&1.91e-01&1.34e-01&1.19e-01&1.15e-01&1.98e\splus00\\
\hline
\rule{0pt}{2.5ex}$2^4$&2.10e-17&5.00e-09&2.17e-01&6.83e-02&5.78e-02&5.34e-02&5.22e-02&3.62e\splus00\\
\hline
\rule{0pt}{2.5ex}$2^5$&9.25e-20&1.85e-17&6.14e-02&2.78e-02&2.91e-02&2.67e-02&2.59e-02&7.17e\splus00\\
\hline
\rule{0pt}{2.5ex}$2^6$&8.90e-17&4.40e-18&4.93e-03&6.51e-03&1.31e-02&1.37e-02&1.34e-02&1.42e\splus01\\
\hline
\rule{0pt}{2.5ex}$2^7$&8.02e-18&5.53e-18&3.19e-05&1.21e-03&4.31e-03&6.78e-03&6.64e-03&2.97e\splus01\\
\hline
\rule{0pt}{2.5ex}$2^8$&8.90e-17&5.35e-17&1.33e-09&4.19e-05&9.69e-04&3.02e-03&3.37e-03&5.70e\splus01\\
\hline
\end{tabular}
\par\bigskip
 \caption{The norm for the error propagation operator of the
 symmetric $W$-cycle algorithm
 with $\beta=10^{-4}$,
 together with the time (in seconds) for one iteration
 of the $W$-cycle algorithm at level 7 ($\O=$ unit square)}
\label{table:WCycle12}
 \end{table}
\begin{table}[htbp]
\begin{tabular}{|c|c|c|c|c|c|c|c||c|}\hline
\backslashbox{$m$}{\lower 2pt\hbox{$k$}}&1&2&3&4&5&6&7&Time \\
\hline
\rule{0pt}{2.5ex}$2^0$&2.55e-01&4.38e-01&3.89e-01&5.68e-01&8.92e-01&8.66e-01&8.36e-01&2.68e-01\\
\hline
\rule{0pt}{2.5ex}$2^1$&6.47e-02&1.93e-01&1.54e-01&3.90e-01&8.13e-01&7.72e-01&7.27e-01&5.32e-01\\
\hline
\rule{0pt}{2.5ex}$2^2$&4.33e-03&3.86e-02&2.51e-02&1.87e-01&7.01e-01&6.37e-01&5.89e-01&9.41e-01\\
\hline
\rule{0pt}{2.5ex}$2^3$&1.96e-05&1.54e-03&6.95e-04&4.06e-02&5.64e-01&4.79e-01&4.40e-01&1.84e\splus00\\
\hline
\rule{0pt}{2.5ex}$2^4$&4.01e-10&2.44e-06&4.89e-07&1.85e-03&4.04e-01&2.96e-01&2.52e-01&3.60e\splus00\\
\hline
\rule{0pt}{2.5ex}$2^5$&7.45e-17&6.23e-12&2.40e-13&4.22e-06&2.12e-01&1.35e-01&1.19e-01&7.16e\splus00\\
\hline
\rule{0pt}{2.5ex}$2^6$&1.08e-16&6.23e-18&3.79e-18&2.11e-11&5.92e-02&6.24e-02&6.16e-02&1.44e\splus01\\
\hline
\rule{0pt}{2.5ex}$2^7$&1.52e-18&1.51e-17&4.83e-17&2.58e-17&7.81e-03&2.08e-02&2.88e-02&2.80e\splus01\\
\hline
\rule{0pt}{2.5ex}$2^8$&1.85e-16&5.20e-17&7.33e-18&2.23e-17&2.14e-04&3.55e-03&1.34e-02&5.56e\splus01\\
\hline
\end{tabular}
\par\bigskip
 \caption{The norms for the error propagation operator of the
  symmetric $W$-cycle algorithm
 with $\beta=10^{-6}$,
 together with the time (in seconds) for one iteration
 of the $W$-cycle algorithm at level 7 ($\O=$ unit square)}
\label{table:WCycle13}
\end{table}
\par
 We have also computed the norms of the error propagation operators for the $k$-th level
 symmetric $V$-cycle algorithm,
 which are very similar to those of the $W$-cycle algorithm.  For brevity we only
 present the  results for
 $\beta=10^{-2},10^{-4},10^{-6}$ and $m=2^0,2^1,2^2$ in Table~\ref{table:VCycle1}.
 Again we observe that the $V$-cycle algorithm is a contraction for $m=1$ and the contraction numbers
 are robust with respect to both $\beta$ and $k$.
\begin{table}[htbp]
\begin{tabular}{|c|c|c|c|c|c|c|c||c|}\hline
\backslashbox{$m$}{\lower 2pt\hbox{$k$}}&1&2&3&4&5&6&7&Time \\
\hline
\multicolumn{9}{|c|}{\rule{0pt}{2.5ex}$\beta=10^{-2}$}\\[2pt]
\hline
\rule{0pt}{2.5ex}$2^0$&3.00e-01&6.88e-01&7.02e-01&6.99e-01&7.00e-01&7.02e-01&7.03e-01&7.65e-02\\
\hline
\rule{0pt}{2.5ex}$2^1$&8.90e-02&4.84e-01&5.13e-01&5.11e-01&5.13e-01&5.15e-01&5.18e-01&1.15e-01\\
\hline
\rule{0pt}{2.5ex}$2^2$&7.93e-03&2.58e-01&2.96e-01&3.06e-01&3.10e-01&3.14e-01&3.17e-01&2.03e-01\\
\hline
\multicolumn{9}{|c|}{\rule{0pt}{2.5ex}$\beta=10^{-4}$}\\[2pt]
\hline
\rule{0pt}{2.5ex}$2^0$&7.01e-02&2.17e-01&6.74e-01&7.08e-01&7.06e-01&7.07e-01&7.09e-01&6.30e-02\\
\hline
\rule{0pt}{2.5ex}$2^1$&6.09e-03&6.31e-02&6.72e-01&5.42e-01&5.26e-01&5.27e-01&5.28e-01&1.08e-01\\
\hline
\rule{0pt}{2.5ex}$2^2$&3.71e-05&3.99e-03&5.63e-01&3.57e-01&3.51e-01&3.49e-01&3.49e-01&1.98e-01\\
\hline
\multicolumn{9}{|c|}{\rule{0pt}{2.5ex}$\beta=10^{-6}$}\\[2pt]
\hline
\rule{0pt}{2.5ex}$2^0$&2.55e-01&4.40e-01&3.88e-01&5.78e-01&8.94e-01&8.93e-01&8.94e-01&6.01e-02\\
\hline
\rule{0pt}{2.5ex}$2^1$&6.47e-02&1.94e-01&1.54e-01&3.86e-01&8.15e-01&8.11e-01&8.14e-01&1.06e-01\\
\hline
\rule{0pt}{2.5ex}$2^2$&4.33e-03&3.87e-02&2.51e-02&1.86e-01&7.02e-01&6.89e-01&6.95e-01&2.15e-01\\
\hline
\end{tabular}
\par\bigskip
 \caption{The norm for the error propagation operator of the
symmetric $V$-cycle algorithm,
 together with the time (in seconds) for one iteration of the
 $V$-cycle algorithm at level 7 ($\O=$ unit square)}
\label{table:VCycle1}
\end{table}
\begin{figure}[htbp]
\hfill
\begin{minipage}{.3\textwidth}
  \centering
  \includegraphics[width=.5\linewidth]{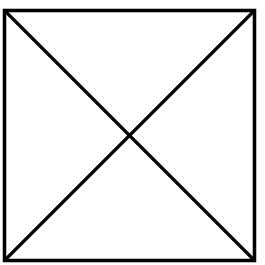}
\end{minipage}%
\begin{minipage}{.5\textwidth}
  \centering
  \includegraphics[width=.3\linewidth]{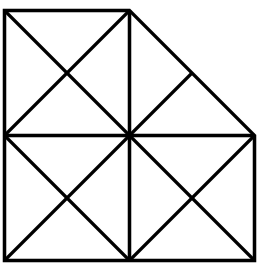}
\end{minipage}
\hfill
\caption{Initial triangulations for  the unit square and the pentagonal domain}
\label{figure:2D}
\end{figure}
\begin{remark}\label{rem:V44}
  We include the contract numbers for $m$ up to $2^8$ in
  Table~\ref{table:WCycle11}--Table~\ref{table:WCycle13}
  so that the theoretical error estimates in Theorem~\ref{thm:WCycle} are clearly visible.
  If we use instead a $V(8,8)$  multigrid solve for \eqref{eq:BVP} in the construction of
  the preconditioner $\fC_k^{-1}$, then it would be enough to show the results for $m$ up to $2^6$.
  This is also true for the other examples.
\end{remark}
\begin{example} {\bf(Pentagonal Domain)}
\par\smallskip\noindent
 The domain $\O$ for this example is the convex pentagonal domain obtained from the unit square by
 removing the triangle with vertices $(1,0.5)$, $(1,1)$ and $(0.5,1)$.  The initial
 triangulation $\cT_0$ is depicted in Figure~\ref{figure:2D} and the triangulations
 $\cT_1,\ldots,\cT_6$ are generated by uniform subdivisions.
\par
 The performance of the symmetric $W$-cycle (and $V$-cycle) algorithm is similar to that for
 the unit square.  In Table~\ref{table:WCycle2} we only report the numerical results for the $W$-cycle algorithm
 with $m=2^0,2^1,2^2$.
 Again we observe that the $W$-cycle algorithm is
 a contraction for $m=1$ and $\|E_k\|$ is robust with respect to $\beta$ and $k$.
 The contraction numbers in Table~\ref{table:WCycle2} are similar to the corresponding contraction numbers
 in Tables~\ref{table:WCycle11}--\ref{table:WCycle13}.
 (The number of DOF at refinement level 6 is roughly $5.7\times10^4$.)
\end{example}
\begin{table}[htbp]
\begin{tabular}{|c|c|c|c|c|c|c||c|}\hline
\backslashbox{$m$}{\lower 2pt\hbox{$k$}}&1&2&3&4&5&6&Time\\
\hline
\multicolumn{8}{|c|}{\rule{0pt}{2.5ex}$\beta=10^{-2}$}\\[2pt]
\hline
\rule{0pt}{2.5ex}$2^0$&6.56e-01&6.69e-01&6.33e-01&6.25e-01&6.23e-01&6.23e-01&1.58e-01\\
\hline
\rule{0pt}{2.5ex}$2^1$&4.68e-01&4.79e-01&4.41e-01&4.27e-01&4.24e-01&4.23e-01&2.76e-01\\
\hline
\rule{0pt}{2.5ex}$2^2$&3.68e-01&2.69e-01&2.37e-01&2.20e-01&2.11e-01&2.05e-01&5.23e-01\\
\hline
\multicolumn{8}{|c|}{\rule{0pt}{2.5ex}$\beta=10^{-4}$}\\[2pt]
\hline
\rule{0pt}{2.5ex}$2^0$&2.42e-01&6.94e-01&7.22e-01&6.58e-01&6.25e-01&6.23e-01&1.56e-01\\
\hline
\rule{0pt}{2.5ex}$2^1$&6.01e-02&5.36e-01&5.60e-01&4.61e-01&4.30e-01&4.25e-01&2.94e-01\\
\hline
\rule{0pt}{2.5ex}$2^2$&4.83e-03&3.12e-01&3.39e-01&2.47e-01&2.17e-01&2.07e-01&5.23e-01\\
\hline
\multicolumn{8}{|c|}{\rule{0pt}{2.5ex}$\beta=10^{-6}$}\\[2pt]
\hline
\rule{0pt}{2.5ex}$2^0$&3.83e-01&2.99e-01&6.34e-01&8.73e-01&8.52e-01&8.22e-01&1.52e-01\\
\hline
\rule{0pt}{2.5ex}$2^1$&1.09e-01&9.60e-02&4.22e-01&8.30e-01&7.83e-01&7.22e-01&2.80e-01\\
\hline
\rule{0pt}{2.5ex}$2^2$&2.26e-02&9.63e-03&1.89e-01&7.41e-01&6.38e-01&5.65e-01&5.24e-01\\
\hline
\end{tabular}
\par\bigskip
\caption{The norm for the error propagation operator of the
 symmetric $W$-cycle algorithm,
together with the time (in seconds) for one iteration of the
 $W$-cycle algorithm at level 6 ($\O=$ pentagonal domain)}
\label{table:WCycle2}
\end{table}
\newpage
\begin{example}  {\bf(Unit Cube)} \label{example:Cube}
\par\smallskip\noindent
 The domain for this example is the unit cube $(0,1)\times(0,1)\times(0,1)$.
 The triangulations $\cT_0$ and $\cT_1$ are depicted in Figure~\ref{figure:Cube}.
 The number of grid points in all directions are doubled in each refinement and the
 triangulations inside the cubic subdomains at all levels are similar to one and other.
\begin{figure}[htbp]
\begin{minipage}{.35\textwidth}
  \centering
  \includegraphics[width=1\linewidth]{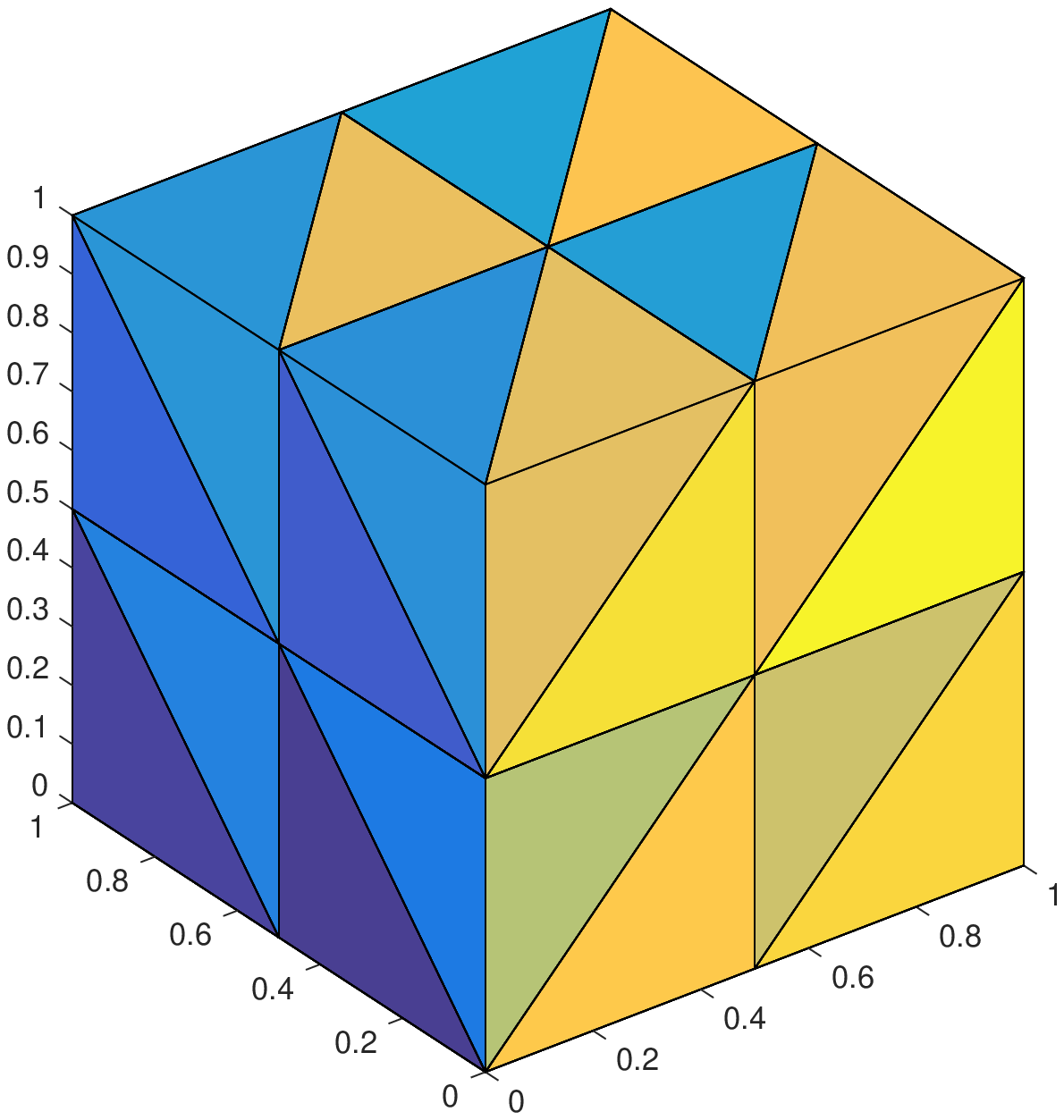}
\end{minipage}%
\begin{minipage}{.35\textwidth}
  \centering
  \includegraphics[width=1\linewidth]{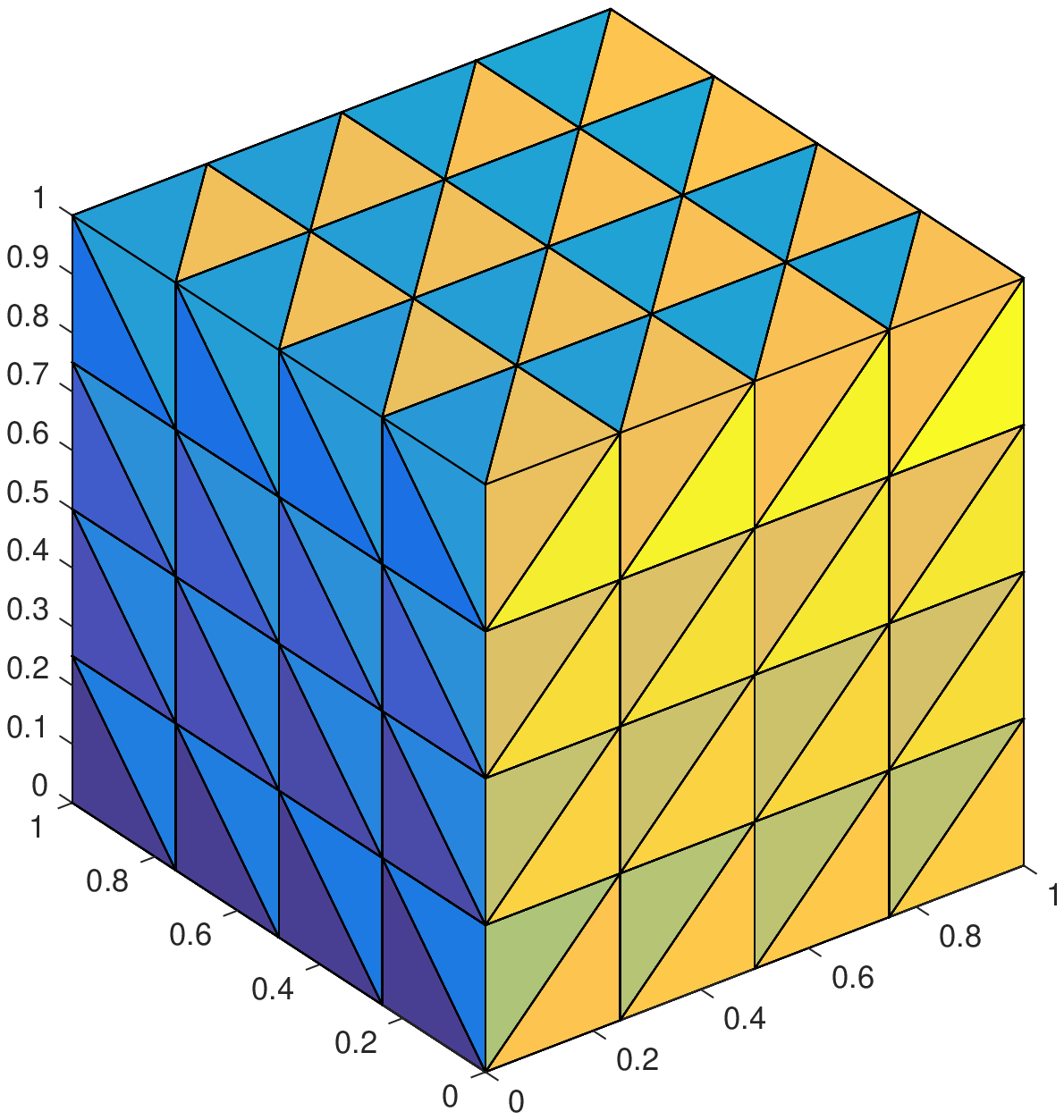}
\end{minipage}
\caption{Triangulations $\cT_0$ and $\cT_1$ for the unit cube}
\label{figure:Cube}
\end{figure}
\par
 The norms $\|E_k\|$ for the error propagation operators of the
 symmetric $W$-cycle algorithm
 with $\beta=10^{-2}$  (resp., $\beta=10^{-4}$ and $\beta=10^{-6}$)
 are displayed in Table~\ref{table:WCycle31}  (resp., Table~\ref{table:WCycle32} and Table~\ref{table:WCycle33}),
 where the number $m$ of pre-smoothing and post-smoothing steps increases from $2^0$ to $2^8$.
 The times for one iteration of the $W$-cycle algorithm at level 5
 (where the number of DOF is roughly $5\times 10^5$) are also included.
\begin{table}[htbp]
\begin{tabular}{|c|c|c|c|c|c||c|}\hline
\backslashbox{$m$}{\lower 2pt\hbox{$k$}}&1&2&3&4&5&Time\\
\hline
\rule{0pt}{2.5ex}$2^0$&3.25e-01&4.12e-01&6.94e-01&8.58e-01&9.04e-01&5.22e-01\\
\hline
\rule{0pt}{2.5ex}$2^1$&1.66e-01&2.16e-01&5.27e-01&7.92e-01&8.85e-01&9.33e-01\\
\hline
\rule{0pt}{2.5ex}$2^2$&5.06e-02&1.51e-01&3.27e-01&6.69e-01&8.56e-01&1.78e\splus00\\
\hline
\rule{0pt}{2.5ex}$2^3$&4.83e-03&8.28e-02&1.90e-01&4.92e-01&7.84e-01&3.98e\splus00\\
\hline
\rule{0pt}{2.5ex}$2^4$&4.41e-05&3.12e-02&1.13e-01&2.89e-01&6.59e-01&6.96e\splus00\\
\hline
\rule{0pt}{2.5ex}$2^5$&3.68e-09&1.03e-02&5.53e-02&1.71e-01&4.81e-01&1.38e\splus01\\
\hline
\rule{0pt}{2.5ex}$2^6$&9.23e-17&4.74e-03&2.89e-02&9.65e-02&2.73e-01&2.77e\splus01\\
\hline
\rule{0pt}{2.5ex}$2^7$&-&2.30e-05&8.31e-03&5.16e-02&1.67e-01&5.48e\splus01\\
\hline
\rule{0pt}{2.5ex}$2^8$&-&5.28e-10&7.67e-04&2.45e-02&9.64e-02&1.10e\splus02\\
\hline
\end{tabular}
\par\bigskip
\caption{The norm for the error propagation operator of the
 symmetric $W$-cycle algorithm
 with  $\beta=10^{-2}$,
 together with the time (in seconds) for one iteration of the
 $W$-cycle algorithm at level 5 ($\O=$ unit cube)}
\label{table:WCycle31}
\end{table}
\begin{table}[htbp]
\begin{tabular}{|c|c|c|c|c|c||c|}\hline
\backslashbox{$m$}{\lower 2pt\hbox{$k$}}&1&2&3&4&5&Time\\
\hline
\rule{0pt}{2.5ex}$2^0$&1.67e-01&4.90e-01&6.50e-01&7.41e-01&8.58e-01&5.43e-01\\
\hline
\rule{0pt}{2.5ex}$2^1$&2.97e-02&2.89e-01&4.74e-01&6.03e-01&8.15e-01&9.37e-01\\
\hline
\rule{0pt}{2.5ex}$2^2$&9.87e-04&1.24e-01&2.85e-01&4.20e-01&7.18e-01&1.79e\splus00\\
\hline
\rule{0pt}{2.5ex}$2^3$&1.07e-06&2.62e-02&1.49e-01&2.48e-01&5.70e-01&4.22e\splus00\\
\hline
\rule{0pt}{2.5ex}$2^4$&2.53e-10&1.14e-03&5.84e-02&1.49e-01&3.64e-01&7.86e\splus00\\
\hline
\rule{0pt}{2.5ex}$2^5$&8.22e-17&2.43e-06&1.57e-02&7.39e-02&2.21e-01&1.40e\splus01\\
\hline
\rule{0pt}{2.5ex}$2^6$&1.36e-18&1.05e-11&7.38e-04&2.27e-02&1.30e-01&2.76e\splus01\\
\hline
\rule{0pt}{2.5ex}$2^7$&2.31e-17&9.11e-17&1.70e-06&4.88e-03&7.21e-02&5.51e\splus01\\
\hline
\rule{0pt}{2.5ex}$2^8$&2.10e-17&3.02e-17&9.36e-12&1.66e-04&3.21e-02&1.10e\splus02\\
\hline
\end{tabular}
\par\bigskip
\caption{The norm for the error propagation operator of the
 symmetric $W$-cycle algorithm
 with  $\beta=10^{-4}$,
 together with the time (in seconds) for one iteration of the
 $W$-cycle algorithm at level 5 ($\O=$ unit cube)}
\label{table:WCycle32}
\end{table}
\begin{table}[htbp]
\begin{tabular}{|c|c|c|c|c|c||c|}\hline
\backslashbox{$m$}{\lower 2pt\hbox{$k$}}&1&2&3&4&5&Time\\
\hline
\rule{0pt}{2.5ex}$2^0$&2.52e-01&3.15e-01&7.92e-01&8.96e-01&8.75e-01&5.10e-01\\
\hline
\rule{0pt}{2.5ex}$2^1$&6.57e-02&1.26e-01&6.38e-01&8.62e-01&7.94e-01&9.50e-01\\
\hline
\rule{0pt}{2.5ex}$2^2$&4.81e-03&1.89e-02&4.25e-01&7.76e-01&6.57e-01&1.78e\splus00\\
\hline
\rule{0pt}{2.5ex}$2^3$&4.62e-05&1.20e-03&1.96e-01&6.36e-01&4.70e-01&4.04e\splus00\\
\hline
\rule{0pt}{2.5ex}$2^4$&3.01e-09&1.87e-06&4.15e-02&4.37e-01&2.76e-01&7.75e\splus00\\
\hline
\rule{0pt}{2.5ex}$2^5$&1.86e-17&3.54e-12&2.19e-03&2.12e-01&1.21e-01&1.38e\splus01\\
\hline
\rule{0pt}{2.5ex}$2^6$&2.74e-17&3.42e-17&2.47e-05&5.24e-02&3.79e-02&2.77e\splus01\\
\hline
\rule{0pt}{2.5ex}$2^7$&3.53e-18&7.74e-17&9.88e-10&6.76e-03&7.38e-03&5.55e\splus01\\
\hline
\rule{0pt}{2.5ex}$2^8$&8.94e-17&1.14e-17&3.25e-16&1.64e-04&1.61e-04&1.10e\splus02\\
\hline
\end{tabular}
\par\bigskip
\caption{The norm for the error propagation operator of the
  symmetric $W$-cycle algorithm
 with  $\beta=10^{-6}$,
 together with the time (in seconds) for one iteration of the
 $W$-cycle algorithm at level 5 ($\O=$ unit cube)}
\label{table:WCycle33}
\end{table}
\par
 We  observe that the symmetric $W$-cycle algorithm is a contraction for $m=1$.  The behavior
 of the contraction numbers agree with Remark~\ref{rem:Transition} and they are robust with respect to
 both $\beta$ and $k$.
 The performance of the symmetric $V$-cycle algorithm is similar and we only present the numerical results
 for $m=2^0$, $2^1$ and $2^{2}$ in Table~\ref{table:VCycle2}.
\begin{table}[hh]
\begin{tabular}{|c|c|c|c|c|c||c|}\hline
\backslashbox{$m$}{\lower 2pt\hbox{$k$}}&1&2&3&4&5&Time\\
\hline
\multicolumn{7}{|c|}{\rule{0pt}{2.5ex}$\beta=10^{-2}$}\\[2pt]
\hline
\rule{0pt}{2.5ex}$2^0$&3.25e-01&4.45e-01&6.46e-01&7.99e-01&8.55e-01&4.27e-01\\
\hline
\rule{0pt}{2.5ex}$2^1$&1.66e-01&2.77e-01&4.87e-01&7.49e-01&8.47e-01&7.72e-01\\
\hline
\rule{0pt}{2.5ex}$2^2$&5.06e-02&1.69e-01&2.95e-01&6.40e-01&8.17e-01&1.49e\splus00\\
\hline
\multicolumn{7}{|c|}{\rule{0pt}{2.5ex}$\beta=10^{-4}$}\\[2pt]
\hline
\rule{0pt}{2.5ex}$2^0$&1.67e-01&4.88e-01&6.44e-01&7.37e-01&8.18e-01&4.25e-01\\
\hline
\rule{0pt}{2.5ex}$2^1$&2.97e-02&2.89e-01&4.65e-01&6.00e-01&7.62e-01&8.07e-01\\
\hline
\rule{0pt}{2.5ex}$2^2$&9.87e-04&1.24e-01&2.78e-01&4.20e-01&6.71e-01&1.49e\splus00\\
\hline
\multicolumn{7}{|c|}{\rule{0pt}{2.5ex}$\beta=10^{-6}$}\\[2pt]
\hline
\rule{0pt}{2.5ex}$2^0$&2.52e-01&3.13e-01&7.92e-01&8.88e-01&8.86e-01&4.39e-01\\
\hline
\rule{0pt}{2.5ex}$2^1$&6.57e-02&1.26e-01&6.38e-01&8.47e-01&8.19e-01&7.87e-01\\
\hline
\rule{0pt}{2.5ex}$2^2$&4.81e-03&1.89e-02&4.25e-01&7.70e-01&7.05e-01&1.66e\splus00\\
\hline
\end{tabular}
\par\bigskip
\caption{The norm for the error propagation operator of the
 symmetric $V$-cycle algorithm,
 together with the time (in seconds) for one iteration of the
 $V$-cycle algorithm at level 5 ($\O=$ unit cube)}
\label{table:VCycle2}
\end{table}
\end{example}
%%%%%%%%%%%%%%%%%%%%%%%%%%%%%%%%%%%%%%%%%%%%%%%%%%%%
\section{Concluding Remarks}\label{sec:Conclusions}
 In this paper
 we construct multigrid  algorithms for a model linear-quadratic
 elliptic optimal control problem and prove
 that for convex domains the $W$-cycle algorithm with a sufficiently large
 number of smoothing steps is uniformly convergent with respect to mesh refinements
 and a regularizing parameter.
  The theoretical estimates and the performance of the algorithms
  are demonstrated by numerical results.
\par
 For the numerical results in Section~\ref{sec:Numerics},
 we use a $V(4,4)$ multigrid solve for \eqref{eq:BVP}
  in the construction of the preconditioner $\fC_k^{-1}$.  But in fact
  the symmetric $V$-cycle multigrid algorithm from Section~\ref{subsec:VCycle}
   based on a $V(1,1)$ solve for \eqref{eq:BVP}
    also converges uniformly with $1$ pre-smoothing step and $1$ post-smoothing step.
 The results for the unit square and
 unit cube are reported in Table~\ref{table:SquareV11} and Table~\ref{table:CubeV11}.
\begin{table}[htbp]
\begin{tabular}{|c|c|c|c|c|c|c|c||c|}\hline
\backslashbox{$\beta$}{\lower 2pt\hbox{$k$}}&1&2&3&4&5&6&7&Time\\
\hline
\rule{0pt}{2.5ex}$10^{-2}$&2.15e-01&7.92e-01&8.20e-01&8.24e-01&8.29e-01&8.33e-01&8.36e-01&4.99e-02\\
\hline
\rule{0pt}{2.5ex}$10^{-4}$&6.32e-02&2.83e-01&6.25e-01&8.11e-01&8.23e-01&8.30e-01&8.35e-01&4.59e-02\\
\hline
\rule{0pt}{2.5ex}$10^{-6}$&2.56e-01&6.49e-01&7.22e-01&8.36e-01&9.52e-01&9.56e-01&9.58e-01&4.58e-02\\
\hline
\end{tabular}
\par\bigskip
\caption{The norm for the error propagation operator of the
  symmetric $V$-cycle algorithm from Section~\ref{subsec:VCycle}
  with $m=1$,
 together with the time (in seconds) for one iteration of the
 $V$-cycle algorithm at level 7
 ($\O=$ unit square)}
\label{table:SquareV11}
\end{table}
\begin{table}[htbp]
\begin{tabular}{|c|c|c|c|c|c||c|}\hline
\backslashbox{$\beta$}{\lower 2pt\hbox{$k$}}&1&2&3&4&5&Time\\
\hline
\rule{0pt}{2.5ex}$10^{-2}$&3.91e-01&6.63e-01&8.16e-01&8.71e-01&8.92e-01&3.21e-01\\
\hline
\rule{0pt}{2.5ex}$10^{-4}$&1.72e-01&7.17e-01&8.42e-01&8.84e-01&9.05e-01&3.18e-01\\
\hline
\rule{0pt}{2.5ex}$10^{-6}$&2.70e-01&5.76e-01&9.31e-01&9.47e-01&9.47e-01&3.15e-01\\
\hline
\end{tabular}
\par\bigskip
\caption{The norm for the error propagation operator of the
  symmetric $V$-cycle algorithm from Section~\ref{subsec:VCycle}
 with  $m=1$, together with the time (in seconds) for one iteration of the
 $V$-cycle algorithm at level 5 ($\O=$ unit cube)}
\label{table:CubeV11}
\end{table}
\par
  Moreover numerical results indicate that
   our multigrid algorithms are also robust for nonconvex domains.  The results for the symmetric
   $V$-cycle algorithm with 1 pre-smoothing step and 1 post-smoothing step
  can be found in Table~\ref{table:LShapeV11},  where the preconditioner
  is also based on a $V(1,1)$ solve for  \eqref{eq:BVP}.
  (The number of DOF at level 6 is roughly $4.8\times10^4$.)
   However our theory for the convex domain does not immediately generalize to nonconvex domains.
  Note that nonconvex domains have been treated in \cite{TZ:2013:MGOCP} with respect to an abstract norm
  defined through the interpolation between function spaces.
\begin{table}[hh]
\begin{tabular}{|c|c|c|c|c|c|c||c|}\hline
\backslashbox{$\beta$}{\lower 2pt\hbox{$k$}}&1&2&3&4&5&6&Time\\
\hline
\rule{0pt}{2.5ex}$10^{-2}$&8.36e-01&8.19e-01&8.21e-01&8.36e-01&8.59e-01&8.89e-01&3.81e-02\\
\hline
\rule{0pt}{2.5ex}$10^{-4}$&1.86e-01&3.54e-01&6.81e-01&7.17e-01&7.25e-01&7.29e-01&3.84e-02\\
\hline
\rule{0pt}{2.5ex}$10^{-6}$&4.74e-01&5.07e-01&7.07e-01&8.67e-01&8.91e-01&9.07e-01&3.82e-02\\
\hline
\end{tabular}
\par\bigskip
\caption{The norm for the error propagation operator of the
 symmetric $V$-cycle algorithm from Section~\ref{subsec:VCycle}
 with $m=1$,
 together with the time (in seconds) for one iteration of the
 $V$-cycle algorithm at level 7
 ($\O=$ $L$-shaped domain)}
\label{table:LShapeV11}
\end{table}
\par
 One of the features of our multigrid algorithms is that they can be applied to
 nonsymmetric saddle point problems with only a trivial modification
 (cf. \cite{BLS:2017:Oseen,BOS:2018:Darcy}).  For example,
 we can also modify our multigrid algorithms to solve
  an optimal control problem with the constraint
 \eqref{eq:PDEConstraint} replaced by
\begin{equation*}
 (\nabla y,\nabla v)_\LT+(\bm{\zeta}\cdot\nabla y,v)_\LT=(u,v)_\LT \qquad
 \forall\,v\in H^1_0(\O),
\end{equation*}
 where $\bm{\zeta}\in [W^{1,\infty}(\O)]^d$ and $\nabla\cdot\bm{\zeta}=0$.
 This and the extension of our theory to nonconvex domains and the $V$-cycle algorithm  will be investigated
 in our ongoing projects.
%%%%%%%%%%%%%%%%%%%%%%%%%%%%%%%%%%%%%%%%%%%%


\begin{thebibliography}{10}

\bibitem{Babushka:1973:LM}
I.~{Babu\v ska}.
\newblock {The finite element method with Lagrange multipliers}.
\newblock {\em Numer. Math.}, 20:179--192, 1973.

\bibitem{BS:2009:MGReview}
A.~Borz{\`{\i}} and V.~Schulz.
\newblock Multigrid methods for {PDE} optimization.
\newblock {\em SIAM Rev.}, 51:361--395, 2009.

\bibitem{BS:2012:OC}
A.~Borz{\`{\i}} and V.~Schulz.
\newblock {\em Computational Optimization of Systems Governed by Partial
  Differential Equations}.
\newblock Society for Industrial and Applied Mathematics, Philadelphia, PA,
  2012.

\bibitem{BZ:2000:AMG}
J.H. Bramble and X.~Zhang.
\newblock {The Analysis of Multigrid Methods}.
\newblock In P.G. Ciarlet and J.L. Lions, editors, {\em Handbook of Numerical
  Analysis, VII}, pages 173--415. North-Holland, Amsterdam, 2000.

\bibitem{BLS:2014:StokesLame}
S.C. Brenner, H.~Li, and L.-Y. Sung.
\newblock Multigrid methods for saddle point problems: {S}tokes and {L}am\'e
  systems.
\newblock {\em Numer. Math.}, 128:193--216, 2014.

\bibitem{BLS:2017:Oseen}
S.C. Brenner, H.~Li, and L.-Y. Sung.
\newblock Multigrid methods for saddle point problems: {O}seen system.
\newblock {\em Comput. Math. Appl.}, 74:2056--2067, 2017.

\bibitem{BOS:2018:Darcy}
S.C. Brenner, D.-S. Oh, and L.-Y. Sung.
\newblock Multigrid methods for saddle point problems: {D}arcy systems.
\newblock {\em Numer. Math.}, 138:437--471, 2018.

\bibitem{BScott:2008:FEM}
S.C. Brenner and L.R. Scott.
\newblock {\em {The Mathematical Theory of Finite Element Methods $($Third
  Edition$)$}}.
\newblock Springer-Verlag, New York, 2008.

\bibitem{Brezzi:1974:SPP}
F.~Brezzi.
\newblock {On the existence, uniqueness and approximation of saddle-point
  problems arising from Lagrangian multipliers}.
\newblock {\em {RAIRO Anal. Num\'er.}}, 8:129--151, 1974.

\bibitem{Ciarlet:1978:FEM}
P.G. Ciarlet.
\newblock {\em {The Finite Element Method for Elliptic Problems}}.
\newblock North-Holland, Amsterdam, 1978.

\bibitem{Dauge:1988:EBV}
M.~Dauge.
\newblock {\em {Elliptic Boundary Value Problems on Corner Domains, {\rm
  Lecture Notes in Mathematics 1341}}}.
\newblock Springer-Verlag, Berlin-Heidelberg, 1988.

\bibitem{ESW:2014:FEFIS}
H.C. Elman, D.J. Silvester, and A.J. Wathen.
\newblock {\em {Finite Elements and Fast Iterative Solvers: with Applications
  in Incompressible Fluid Dynamics}}.
\newblock Oxford University Press, Oxford, second edition, 2014.

\bibitem{EG:2011:MG}
M.~Engel and M.~Griebel.
\newblock A multigrid method for constrained optimal control problems.
\newblock {\em J. Comput. Appl. Math.}, 235:4368--4388, 2011.

\bibitem{Grisvard:1985:EPN}
P.~Grisvard.
\newblock {\em {Elliptic Problems in Non Smooth Domains}}.
\newblock Pitman, Boston, 1985.

\bibitem{Hackbusch:1985:MMA}
W.~Hackbusch.
\newblock {\em {Multi-grid Methods and Applications}}.
\newblock Springer-Verlag, Berlin-Heidelberg-New York-Tokyo, 1985.

\bibitem{Lions:1971:OC}
J.-L. Lions.
\newblock {\em Optimal {C}ontrol of {S}ystems {G}overned by {P}artial
  {D}ifferential {E}quations.}
\newblock Springer-Verlag, New York, 1971.

\bibitem{MMB:1987:VMT}
J.~Mandel, S.~McCormick, and R.~Bank.
\newblock {Variational Multigrid Theory}.
\newblock In S.~McCormick, editor, {\em Multigrid Methods, Frontiers In Applied
  Mathematics 3}, pages 131--177. SIAM, Philadelphia, 1987.

\bibitem{MW:2011:SaddlePoint}
K.-A. Mardal and R.~Winther.
\newblock Preconditioning discretizations of systems of partial differential
  equations.
\newblock {\em Numer. Linear Algebra Appl.}, 18:1--40, 2011.

\bibitem{Saad:2003:IM}
Y.~Saad.
\newblock {\em {Iterative Methods for Sparse Linear Systems}}.
\newblock SIAM, Philadelphia, 2003.

\bibitem{SSZ:2011:MG}
J.~Sch{\"o}berl, R.~Simon, and W.~Zulehner.
\newblock A robust multigrid method for elliptic optimal control problems.
\newblock {\em SIAM J. Numer. Anal.}, 49:1482--1503, 2011.

\bibitem{TZ:2013:MGOCP}
S.~Takacs and W.~Zulehner.
\newblock Convergence analysis of all-at-once multigrid methods for elliptic
  control problems under partial elliptic regularity.
\newblock {\em SIAM J. Numer. Anal.}, 51:1853--1874, 2013.

\bibitem{Troltzsch:2010:OC}
F.~Tr{\"o}ltzsch.
\newblock {\em Optimal Control of Partial Differential Equations}.
\newblock American Mathematical Society, Providence, RI, 2010.

\bibitem{Walker:2018:FELICITY}
S.W. Walker.
\newblock F{ELICITY}: {A} {MATLAB}/{C}++ toolbox for developing finite element
  methods and simulation modeling.
\newblock {\em SIAM J. Sci. Comput.}, 40:C234--C257, 2018.

\bibitem{XZ:2003:BB}
J.~Xu and L.~Zikatanov.
\newblock Some observations on {B}abu\v ska and {B}rezzi theories.
\newblock {\em Numer. Math.}, 94:195--202, 2003.

\end{thebibliography}
\end{document}